\theoremstyle{plain}
\newtheorem{thm}{Theorem}[section]
\newtheorem*{thm*}{Theorem}
\newtheorem{prop}{Proposition}[section]
\newtheorem*{prop*}{Proposition}
\newtheorem{cor}{Corollary}[section]
\newtheorem*{cor*}{Corollary}
\newtheorem{lem}{Lemma}[section]
\newtheorem*{lem*}{Lemma}
\theoremstyle{definition}
\newtheorem{defn}{Definition}[section]
\newtheorem*{defn*}{Definition}
\newtheorem*{exmps*}{Examples}
\newtheorem*{exmp*}{Example}
\newtheorem*{exerc*}{Exercise}
\newtheorem{rems}{Remarks}[section]
\newtheorem*{rems*}{Remarks}
\newtheorem{rem}[rems]{Remark}
\newtheorem*{rem*}{Remark}
\newcommand{\N}{{\mathbb N}}
\newcommand{\Z}{{\mathbb Z}}
\newcommand{\R}{{\mathbb R}}
\newcommand{\C}{{\mathbb C}}
\renewcommand{\iff}{\: \Leftrightarrow\: }
\renewcommand{\bar}{\overline}
\DeclareMathOperator{\Rep}{Re\ignorespaces}
\DeclareMathOperator{\Imp}{Im\ignorespaces}
\DeclareMathOperator{\dist}{dist}
\DeclareMathOperator*{\esup}{\mbox{$E_A$}-ess\,sup}
\begin{document}
\title[On the regularity of scalar type spectral $C_0$-semigroups]
{On the regularity\\ of scalar type spectral $C_0$-semigroups}
\author[Marat V. Markin]{Marat V. Markin}
\address{
Department of Mathematics\newline
California State University, Fresno\newline
5245 N. Backer Avenue, M/S PB 108\newline
Fresno, CA 93740-8001, USA
}
\email{mmarkin@csufresno.edu}
\subjclass{Primary 47B40, 47D03, 47D60; Secondary 47B15, 47D06}
\keywords{Scalar type spectral operator, $C_0$-semigroup of linear operators}
\begin{abstract}
We show that, for the $C_0$-semigroups of \textit{scalar type spectral operators}, a well-known necessary condition for the generation of \textit{eventually norm-continuous} 
$C_0$-semigroups, formulated exclusively in terms of the location of the spectrum of the semigroup's generator in the complex plane, is also sufficient and, in fact, characterizes the generators of \textit{immediately norm-continuous}  such semigroups. 

Combining characterizations of the \textit{immediate differentiability} and the \textit{Gev\-rey ultradifferentiability} of scalar type spectral $C_0$-semigroups with the \textit{generation theorem}, found earlier by the author, we arrive at respective characterizations of the generation of such semigroups.

We further establish characterizations of the generation of \textit{eventually differentiable} and \textit{immediately compact} scalar type spectral $C_0$-semigroups also in terms of the generator's spectrum and show that, for such semigroups, eventual compactness implies immediate. 

All the obtained results are instantly transferred to the $C_0$-semigroups of \textit{normal operators}.
\end{abstract}
\maketitle

\section[Introduction]{Introduction}

While a Hille–Yosida type characterization of the generation of \textit{eventually norm continuous} $C_0$-semigroups (see Preliminaries) appears to be still unestablished (see, e.g.,  \cite{Blasco-Martinez1996,El-Mennaoui-Engel1996,Goersmeyer-Weis1999,Engel-Nagel,Engel-Nagel2006}, for $C_0$-semigroups on Hilbert spaces, see \cite{You1992,El-Mennaoui-Engel1994} and \cite{Engel-Nagel}), well-known is the following necessary condition formulated exclusively in terms of the location of the spectrum of the semigroup's generator in the complex plane.

\begin{thm}[Necessary Condition for the Generation of Eventually Norm-Continuous $C_0$-Semigroups {\cite[Theorem II.$5.3$]{Engel-Nagel2006}}]\label{NCENC}
If $A$ is a generator of an eventually norm continuous $C_0$-semigroup on a complex Banach space, then, for any $b\in \R$, the set
\begin{equation*}
\left\{\lambda\in\sigma(A) \,\middle|\, \Rep\lambda\ge b\right\}
\end{equation*}
($\sigma(\cdot)$ is the \textit{spectrum} of an operator) is bounded.
\end{thm}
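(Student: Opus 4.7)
The plan is to argue by contradiction using approximate eigenvectors, exploiting the rapid phase oscillation of $e^{t\lambda}$ for large $|\Imp\lambda|$ against the slow norm-variation forced by norm continuity of $T(\cdot)$.

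Suppose $(T(t))_{t\ge 0}$ is norm-continuous on $(t_0,\infty)$ and, for some $b\in\R$, that the displayed set is unbounded. Since $\sigma(A)\subseteq\{\Rep\lambda\le\omega_0(T)\}$, the real parts there are confined to $[b,\omega_0(T)]$, so I extract $\lambda_n=\alpha_n+i\beta_n\in\sigma(A)$ with $\alpha_n\to\alpha^*\in[b,\omega_0(T)]$ and (by symmetry) $\beta_n\to+\infty$. Next I slide each $\lambda_n$ horizontally right along $\{\Imp\lambda=\beta_n\}$ to the rightmost point of $\sigma(A)$ on that slice---possible because $\sigma(A)$ is closed and capped above by $\omega_0(T)$. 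The landing point lies in $\partial\sigma(A)\subseteq\sigma_{ap}(A)$, retains imaginary part $\beta_n$, and still has real part in $[b,\omega_0(T)]$. Renaming to $\lambda_n$, I obtain unit vectors $x_n\in D(A)$ with $\|(A-\lambda_n)x_n\|\to 0$.

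The variation-of-parameters identity then transfers these to approximate eigenvectors of $T(s)$ at $e^{s\lambda_n}$:
\[
\|T(s)x_n-e^{s\lambda_n}x_n\|\le \|(A-\lambda_n)x_n\|\int_0^s e^{r\alpha_n}\|T(s-r)\|\,dr\longrightarrow 0,
\]
uniformly for $s$ in compact subintervals of $[0,\infty)$. Fix $t>t_0$ and set $t_n':=t+\pi/\beta_n$, so $t_n'\to t$ and $t_n'>t_0$ eventually. A direct calculation yields $|e^{t_n'\lambda_n}-e^{t\lambda_n}|=e^{t\alpha_n}(e^{\pi\alpha_n/\beta_n}+1)\to 2e^{t\alpha^*}>0$; combined with the approximate-eigenvector estimates at both $t$ and $t_n'$, this forces $\liminf_n\|T(t)x_n-T(t_n')x_n\|\ge 2e^{t\alpha^*}>0$, contradicting $\|T(t)-T(t_n')\|\to 0$ from norm continuity at $t$.

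I anticipate the main obstacle to be the horizontal-slide step promoting the sequence into $\sigma_{ap}(A)$; this is precisely where the growth-bound containment $\sigma(A)\subseteq\{\Rep\lambda\le\omega_0(T)\}$ is used decisively. After that, everything hinges on a simple but sharp observation: nudging time by $\pi/\beta_n$ rotates $e^{t\lambda_n}$ by $e^{i\pi}=-1$, converting a vanishing norm-increment of $T(\cdot)$ into a macroscopic spectral displacement that norm continuity cannot absorb.
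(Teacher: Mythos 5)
Your argument is essentially correct, and it takes a genuinely different route from the one the paper relies on: the paper does not prove Theorem \ref{NCENC} at all but quotes it from Engel--Nagel, where the standard argument is resolvent-based --- eventual norm continuity forces $\|R(a+is,A)\|\to 0$ as $|s|\to\infty$ for $a>\omega_0$ (a Riemann--Lebesgue-type estimate applied to the Laplace-transform representation $R(a+is,A)=\int_0^\infty e^{-(a+is)t}T(t)\,dt$), and then $\|R(\lambda,A)\|\ge \dist(\lambda,\sigma(A))^{-1}$ excludes spectrum with large imaginary part in any half-plane $\left\{\Rep\lambda\ge b\right\}$. You instead work directly with the semigroup: the real parts of the offending spectral points are trapped in $[b,\omega_0]$ since $s(A)\le\omega_0<\infty$; the horizontal slide to the rightmost spectral point on each slice is legitimate because that point is a limit of resolvent points, hence lies in $\partial\sigma(A)\subseteq\sigma_{ap}(A)$ (valid for a closed operator with nonempty resolvent set, so for a generator); the identity $e^{s\lambda}x-T(s)x=\int_0^s e^{r\lambda}T(s-r)(\lambda-A)x\,dr$ converts approximate eigenvectors of $A$ into approximate eigenvectors of $T(s)$ uniformly on compact $s$-intervals; and the phase flip $t\mapsto t+\pi/\beta_n$ produces the macroscopic displacement. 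The only blemish is bookkeeping: after the slide the real parts need not converge to the previously extracted $\alpha^*$, so either re-extract a convergent subsequence or simply note $|e^{t_n'\lambda_n}-e^{t\lambda_n}|=e^{t\alpha_n}\left(e^{\pi\alpha_n/\beta_n}+1\right)\ge e^{tb}>0$, which already contradicts $\|T(t)-T(t_n')\|\to 0$. As for what each approach buys: the resolvent route yields the quantitative decay of $\|R(a+is,A)\|$ along vertical lines, which is the substantive half of the Hilbert-space characterizations of norm continuity the paper later invokes, whereas your route is more elementary and self-contained, avoids Laplace-transform estimates altogether, and in fact uses norm continuity only at a single instant $t>t_0$.
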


To a significant extent, the importance of eventually norm-continuous $C_0$-semigroups
is attributed the fact, that such a semigroup $\left\{T(t) \right\}_{t\ge 0}$ with generator $A$ is subject to a \textit{weak spectral mapping theorem} 
\begin{equation*}\tag{WSMT}\label{WSMT}
\sigma(T(t))\setminus \{0\}=\overline{e^{t\sigma(A)}}\setminus \{0\},\ t\ge 0,
\end{equation*}
($\overline{\cdot}$ is the \textit{closure} of a set)  
\cite[Examples V.$2.2$]{Engel-Nagel2006}, in fact, 
a stronger \textit{spectral mapping theorem}
\begin{equation*}\tag{SMT}\label{SMT}
\sigma(T(t))\setminus \{0\}=e^{t\sigma(A)},\ t\ge 0,
\end{equation*}
\cite[Theorem V.$2.8$, Proposition  V.$2.3$]{Engel-Nagel2006}, and with that the following \textit{spectral bound equal growth bound condition} 
\begin{equation*}\tag{SBeGB}\label{SBeGB}
s(A)=\omega_0,
\end{equation*}
where
\[
s(A):=\sup\left\{\Rep\lambda \,\middle|\,\lambda\in\sigma(A) \right\}
\ (s(A):=-\infty\ \text{if}\ \sigma(A)=\emptyset)
\] 
is the \textit{spectral bound} of the generator and
\begin{equation}\label{gb}
\omega_0:=\inf\left\{\omega\in\R \,\middle|\,\exists\,M=M(\omega)\ge 1:\ \|T(t)\|\le M e^{\omega t},\ t\ge 0 \right\}
\end{equation}
is the \textit{growth bound} of the semigroup \cite[Proposition  V.$2.3$, Corollary  V.$2.10$]{Engel-Nagel2006}. The asymptotic behavior of a $C_0$-semigroup satisfying the \textit{spectral bound equal growth bound condition} \eqref{SBeGB} is governed by the spectral bound of its generator and, in particular, is subject to a \textit{generalized Lyapunov stability theorem} \cite[Theorem V.$3.6$]{Engel-Nagel2006}.

Eventually norm-continuous are $C_0$-semigroups with certain regularity properties such as \textit{eventual differentiability}, in particular \textit{analyticity} or \textit{uniform continuity}, or \textit{eventual compactness} \cite{Pazy1968,Yosida1958} (see also \cite{Engel-Nagel2006}).

As is shown in \cite{Markin2021(2)}, a \textit{scalar type spectral} $C_0$-semigroup $\left\{T(t) \right\}_{t\ge 0}$ with generator $A$ is subject to the following \textit{weak spectral mapping theorem}
\begin{equation*}
\sigma(T(t))=\overline{e^{t\sigma(A)}},\ t\ge 0,
\end{equation*}
\cite[Theorem $4.1$]{Markin2021(2)} (cf. \cite[Corollary V.$2.12$]{Engel-Nagel2006})
and with that \textit{spectral bound equal growth bound condition} \eqref{SBeGB} without any regularity requirement.

We show that, for the $C_0$-semigroups of \textit{scalar type spectral operators}, the above necessary condition is also sufficient  and, in fact, characterizes the generators of \textit{immediately norm continuous} such semigroups. 

Combining the characterizations of the \textit{immediate differentiability} and \textit{Gevrey ultradifferentiability} of scalar type spectral $C_0$-semigroups 
found in \cite{Markin2002(2),Markin2004(1),Markin2016} (see also \cite{Markin2008}) with the \textit{generation theorem} \cite[Proposition $3.1$]{Markin2002(2)}, we arrive at respective characterizations of the generation of such semigroups.

We further establish characterizations of the generation of \textit{eventually differentiable} and \textit{immediately compact} scalar type spectral $C_0$-semigroups also in terms of the generator's spectrum and show that, for such semigroups, eventual compactness implies immediate. 

All the obtained results are immediately transferred to the $C_0$-semigroups of \textit{normal operators}.

This work continues the series of papers \cite{Markin2002(2),Markin2004(1),Markin2008,Markin2016} (see also \cite{Markin2004(2),Markin2015}) on the generation of  differentiabile and Carleman, in particular Gevrey, ultradifferentiabile scalar type spectral $C_0$-semigroups .
 
\section[Preliminaries]{Preliminaries}

Here, we concisely outline preliminaries necessary for our subsequent discourse.

\subsection{$C_0$-Semigroups}\

\begin{defn}[Norm-Continuity]\ \\
A $C_0$-semigroup $\left\{T(t)\right\}_{t\ge 0}$ on a Banach space $(X,\|\cdot\|)$ is said to be
\begin{itemize}
\item \textit{eventually norm-continuous} if there exists a $t_0>0$ such that the operator function
\[
[t_0,\infty)\ni t\mapsto T(t)\in L(X)
\]
($L(X)$ is the space of bounded linear operators on $X$)
is continuous relative to the operator norm;
\item \textit{immediately norm-continuous} if the operator function
\begin{equation*} 
(0,\infty)\ni t\mapsto T(t)\in L(X)
\end{equation*} 
is continuous relative to the operator norm;
\item \textit{uniformly continuous} if the operator function
\begin{equation*} 
[0,\infty)\ni t\mapsto T(t)\in L(X)
\end{equation*} 
is continuous relative to the operator norm.
\end{itemize}
\end{defn}

\begin{defn}[Differentiability]\ \\
A $C_0$-semigroup $\left\{T(t)\right\}_{t\ge 0}$ on a Banach space $(X,\|\cdot\|)$ is said to be
\begin{itemize}
\item \textit{eventually differentiable} if there exists a $t_0>0$ such that the semigroup is strongly differentiable on $[t_0,\infty)$, i.e., the orbit maps
\begin{equation*}
[0,\infty)\ni t\mapsto T(t)f\in X
\end{equation*} 
are differentiable on $[t_0,\infty)$ for all $f\in X$;
\item \textit{immediately differentiable} if the semigroup is strongly differentiable on $(0,\infty)$, i.e., the orbit maps
\begin{equation*}
[0,\infty)\ni t\mapsto T(t)f\in X
\end{equation*} 
are differentiable on $(0,\infty)$ for all $f\in X$.
\end{itemize}
\end{defn}

\begin{samepage}
\begin{rems}\label{remsed}\
\begin{itemize}
\item If a $C_0$-semigroup is strongly differentiable for $t\ge t_0$, it is \textit{norm-continuous} for $t\ge t_0$ \cite[Lemma $2.1$]{Pazy1968} (see also \cite[Theorem $10.3.5$]{Hille-Phillips}).
\item An immediately differentiable $C_0$-semigroup  $\left\{T(t)\right\}_{t\ge 0}$, often referred to as a $C^\infty$-semigroup, is \textit{immediately norm-continuous}, and furthermore, \textit{infinite differentiable} on $(0,\infty)$ relative to the operator norm with
\[
T^{(n)}(t)=A^nT(t),\ t>0,
\]
\cite[Lemma $2.1$]{Pazy1968}.
\end{itemize}
\end{rems}
\end{samepage}

\begin{defn}[Compactness]\ \\
A $C_0$-semigroup $\left\{T(t)\right\}_{t\ge 0}$ on a Banach space $(X,\|\cdot\|)$ is said to be
\begin{itemize}
\item \textit{eventually compact} if there exists a $t_0>0$ such that the operator $T(t_0)$ is compact, and hence, $T(t)$ is compact for all $t\ge t_0$ (see, e.g., \cite{Markin2020EOT});
\item \textit{immediately compact} if the operator $T(t)$ is compact for all $t>0$.
\end{itemize}
\end{defn}

\begin{rems}\label{remscs}\
\begin{itemize}
\item If a $C_0$-semigroup is compact for $t\ge t_0$, it is \textit{norm-continuous} for $t\ge t_0$ \cite[Theorem $3.1$]{Pazy1968}.

Thus, an immediately compact $C_0$-semigroup is \textit{immediately norm-conti\-nuous}.
\item Observe that a \textit{bounded} operator $A$ in a complex infinite-dimensional Banach space cannot generate a \textit{compact} $C_0$-semigroup $\left\{T(t)\right\}_{t\ge 0}$, which would be \textit{uniformly continuous}, and hence, the identity operator $I$ would be \textit{compact} as the uniform limit of $T(t)$ as $t\to 0+$. The latter would contradict the infinite-dimensionality of $X$ 
(see, e.g., \cite{Markin2020EOT}).
\end{itemize}
\end{rems}

For a linear operator $A$ generating a $C_0$-semigroup $\left\{ T(t)\right\}_{t\ge 0}$ in a complex Banach space $(X,\|\cdot\|)$ the \textit{abstract Cauchy problem}
\begin{equation}\tag{ACP}\label{ACP}
\begin{cases}
y'(t)=Ay(t),\ t\ge 0,\\
y(0)=f\in X,
\end{cases}
\end{equation}
is \textit{well-posed}, in particular \textit{uniquely solvable}, in which case the \textit{weak solutions} (also called \textit{mild solutions}) of the associated \textit{abstract evolution equation} 
\begin{equation*}\tag{AEE}\label{AEE}
y'(t)=Ay(t),\ t\ge 0,
\end{equation*}
are the orbit maps
\begin{equation*}
y(t)=T(t)f,\ t\ge 0,
\end{equation*}
with $f\in X$ \cite{Ball,Engel-Nagel,Engel-Nagel2006}, whereas the \textit{classical} ones are the orbit maps with $f\in D(A)$.

\subsection{Scalar Type Spectral Operators}\

A {\it scalar type spectral operator} is a densely defined closed linear operator $A$ in a complex Banach space $(X,\|\cdot\|)$ with strongly $\sigma$-additive \textit{spectral measure} (the \textit{resolution of the identity}) $E_A(\cdot)$, assigning to Borel sets of the complex plane $\C$ projection operators on $X$ and having the operator's \textit{spectrum} $\sigma(A)$ as its {\it support}, and the associated {\it Borel operational calculus}, assigning to each Borel measurable function $F:\sigma(A)\to \bar{\C}$ ($\bar{\C}:=\C\cup \{\infty\}$ is the extended complex plane) with $E_A\left(\left\{\lambda\in \C \,\middle|\, F(\lambda)=\infty\right\}\right)=0$ (here and whenever appropriate, $0$ designates the \textit{zero operator} on $X$) a scalar type spectral operator in $X$
\begin{equation*}
F(A):=\int\limits_{\sigma(A)} F(\lambda)\,dE_A(\lambda)
\end{equation*} 
with
\begin{equation*}
A=\int\limits_{\sigma(A)} \lambda\,dE_A(\lambda)
\end{equation*}
\cite{Dunford1954,Survey58,Dun-SchIII}.

In a complex finite-dimensional space, 
the scalar type spectral operators are those linear operators on the space, which furnish an \textit{eigenbasis} for it (see, e.g., \cite{Survey58,Dun-SchIII}) and, in a complex Hilbert space, the scalar type spectral operators are those that are similar to the {\it normal operators} \cite{Wermer1954}.

Due to its strong $\sigma$-additivity, the spectral measure is uniformly bounded, i.e.,
\begin{equation}\label{bounded}
\exists\, M\ge 1\ \forall\,\delta\in \mathscr{B}(\C):\ \|E_A(\delta)\|\le M
\end{equation}
($\mathscr{B}(\C)$ is the \textit{Borel $\sigma$-algebra} on $\C$)
(see, e.g., \cite{Dun-SchI}).

\begin{rem}
Here and henceforth, we use the same notation $\|\cdot\|$ for the operator norm and adhere to this rather common economy of symbols also for the functional norm in the \textit{dual space} $X^*$.
\end{rem}

By \cite[Theorem XVIII.$2.11$ (c)]{Dun-SchIII}, for a Borel measurable function $F:\sigma(A)\to \bar{\C}$, the operator $F(A)$ is \textit{bounded} iff
\[
\esup_{\lambda\in \sigma(A)}|F(\lambda)|<\infty,
\]
in which case, we have the estimates
\begin{equation}\label{boundedop}
\esup_{\lambda\in \sigma(A)}|F(\lambda)|\le \|F(A)\|
\le 4M\esup_{\lambda\in \sigma(A)}|F(\lambda)|,
\end{equation}
where $M\ge 1$ is from \eqref{bounded}.

In particular, this implies that a scalar type spectral operator is \textit{bounded} iff its spectrum $\sigma(A)$ is a \textit{bounded} set.

For arbitrary Borel measurable function $F:\C\to \C$, $f\in D(F(A))$, $g^*\in X^*$, and Borel set $\delta\subseteq \C$,
\begin{equation}\label{cond(ii)}
\int\limits_\delta|F(\lambda)|\,dv(f,g^*,\lambda)
\le 4M\|E_A(\delta)F(A)f\|\|g^*\|,
\end{equation}
where $v(f,g^*,\cdot)$ is the \textit{total variation measure} of the complex-valued Borel measure $\langle E_A(\cdot)f,g^* \rangle$, for which
\begin{equation}\label{tv}
v(f,g^*,\C)=v(f,g^*,\sigma(A))\le 4M\|f\|\|g^*\|,
\end{equation}
where $M\ge 1$ in \eqref{cond(ii)} and \eqref{tv}
is from \eqref{bounded} (see, e.g., \cite{Markin2004(1),Markin2004(2)}). 

The following statement allows to characterize the domains of the Borel measurable functions of a scalar type spectral operator in terms of positive Borel measures.

\begin{prop}[{\cite[Proposition $3.1$]{Markin2002(1)}}]\label{prop}\ \\
Let $A$ be a scalar type spectral operator in a complex Banach space $X$ with spectral measure $E_A(\cdot)$ and $F:\sigma(A)\to \C$ be a Borel measurable function. Then $f\in D(F(A))$ iff
\begin{enumerate}[label={(\roman*)}]
\item for each $g^*\in X^*$, 
$\displaystyle \int\limits_{\sigma(A)} |F(\lambda)|\,d v(f,g^*,\lambda)<\infty$ and
\item $\displaystyle \sup_{\{g^*\in X^*\,|\,\|g^*\|=1\}}
\int\limits_{\{\lambda\in\sigma(A)\,|\,|F(\lambda)|>n\}}
|F(\lambda)|\,dv(f,g^*,\lambda)\to 0,\ n\to\infty$,
\end{enumerate}
where $X^*$ is the dual space and  $v(f,g^*,\cdot)$ is the total variation measure of $\langle E_A(\cdot)f,g^* \rangle$.
\end{prop}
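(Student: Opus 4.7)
The plan is to prove the two implications separately, leveraging the fundamental estimate \eqref{cond(ii)} for the necessity and a Hahn-Banach duality together with a truncation argument for the sufficiency.

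For \emph{necessity}, I would first specialize \eqref{cond(ii)} to $\delta=\sigma(A)$: since $F(A)f\in X$ has finite norm, condition (i) falls out at once. For (ii), I would put $\delta_n:=\{\lambda\in\sigma(A):|F(\lambda)|>n\}$; because $F$ takes values in $\C$, these (decreasing) sets have empty intersection, so the strong $\sigma$-additivity of $E_A(\cdot)$ yields $E_A(\delta_n)F(A)f\to 0$ in $X$. Applying \eqref{cond(ii)} with $\delta=\delta_n$ and then passing to the supremum over the unit sphere of $X^*$ converts this strong convergence into the uniform vanishing
\[
\sup_{\|g^*\|=1}\int_{\delta_n}|F(\lambda)|\,dv(f,g^*,\lambda)\le 4M\|E_A(\delta_n)F(A)f\|\to 0,
\]
which is exactly (ii).

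For \emph{sufficiency}, I would fix the bounded Borel truncations $F_n:=F\,\chi_{\{|F|\le n\}}$; each $F_n(A)\in L(X)$ by \eqref{boundedop}, with integral representation $\langle F_n(A)h,g^*\rangle=\int F_n\,d\langle E_A(\cdot)h,g^*\rangle$. Using the Hahn-Banach duality formula $\|y\|=\sup_{\|g^*\|=1}|\langle y,g^*\rangle|$, for $m>n$ I would estimate
\[
\|F_m(A)f-F_n(A)f\|\le \sup_{\|g^*\|=1}\int_{\{n<|F|\le m\}}|F(\lambda)|\,dv(f,g^*,\lambda)\le \sup_{\|g^*\|=1}\int_{\{|F|>n\}}|F(\lambda)|\,dv(f,g^*,\lambda),
\]
which vanishes as $n\to\infty$ by (ii). Thus $\{F_n(A)f\}$ is Cauchy in $X$, converging to some $h\in X$. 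I would then use (i) together with the dominated convergence theorem (dominant $|F|$) to pass the limit through the scalar integrals, obtaining
\[
\langle h,g^*\rangle=\lim_n\int F_n\,d\langle E_A(\cdot)f,g^*\rangle=\int_{\sigma(A)}F(\lambda)\,d\langle E_A(\lambda)f,g^*\rangle\qquad(g^*\in X^*).
\]
This is precisely the defining relation for $f\in D(F(A))$ with $F(A)f=h$ in the Dunford-Schwartz operational calculus.

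\emph{Main obstacle.} The analytic content is modest once the framework is in place; the substantive step is on the sufficiency side, where the uniform integrability of $|F|$ against the family $\{v(f,g^*,\cdot):\|g^*\|=1\}$ furnished by (ii) must be converted into strong Cauchy-ness of the truncated orbit $\{F_n(A)f\}$. Hahn-Banach duality, combined with the representation of $F_n(A)$ as a scalar integral, is what makes this conversion automatic. The necessity is a clean consequence of the strong $\sigma$-additivity of $E_A(\cdot)$, uniformized over the dual unit ball via the $4M$-factor in \eqref{cond(ii)}.
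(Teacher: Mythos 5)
Your argument is correct: necessity follows from \eqref{cond(ii)} applied with $\delta=\sigma(A)$ and with $\delta_n=\{\lambda\in\sigma(A)\,|\,|F(\lambda)|>n\}$ together with the strong $\sigma$-additivity of $E_A(\cdot)$ (so that $E_A(\delta_n)F(A)f\to 0$), and sufficiency follows since the Cauchy-ness of the truncations $F_n(A)f$, obtained from (ii) via the Hahn--Banach duality bound, is precisely the Dunford--Schwartz criterion for $f\in D(F(A))$, with (i) and dominated convergence only serving to identify the limit. Note that the present paper does not prove this proposition but imports it from \cite{Markin2002(1)}, and your truncation-plus-duality route is essentially the same argument as in that source.
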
 

In \cite{Markin2002(2)}, it is shown that, for scalar type spectral $C_0$-semigroups, the classical \textit{Hille-Yosida-Feller-Miyadera-Phillips
generation theorem} (see, e.g., \cite[Theorems II.$3.5$ and II.$3.8$]{Engel-Nagel2006}) acquires the following inherently qualitative form exclusively in terms of the location of te generator's spectrum in the complex plane, void of any estimates on the generator's resolvent.

\begin{thm}[Generation Theorem {\cite[Proposition $3.1$]{Markin2002(2)}}]\label{GT}\ \\
A scalar type spectral operator $A$ in a complex Banach space with spectral measure $E_A(\cdot)$ generates a $C_0$-semigroup (of scalar type spectral operators) $\left\{T(t) \right\}_{t\ge 0}$ iff there exists an $\omega \in \R$ such that
\[
\sigma(A)\subseteq \left\{\lambda\in\C \,\middle|\, \Rep\lambda\le \omega\right\}
\] 
in which case
\[
T(t)=e^{tA}:=\int\limits_{\sigma(A)} e^{t\lambda}\,dE_A(\lambda),\ t\ge 0,
\]
\end{thm}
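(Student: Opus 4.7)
The plan is to treat necessity and sufficiency separately. Necessity is inherited from general $C_0$-semigroup theory: if $A$ generates $\{T(t)\}_{t\ge 0}$, then by the Hille-Yosida estimate $\|T(t)\|\le Me^{\omega_0 t}$ every $\lambda\in\C$ with $\Rep\lambda>\omega_0$ lies in $\rho(A)$, so $\sigma(A)\subseteq\{\lambda\in\C\,|\,\Rep\lambda\le\omega_0\}$. No use of the scalar type spectral structure is needed here.

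For sufficiency, assume $\sigma(A)\subseteq\{\lambda\,|\,\Rep\lambda\le\omega\}$. Then for every $t\ge 0$ the function $\lambda\mapsto e^{t\lambda}$ satisfies $|e^{t\lambda}|\le e^{t\omega}$ on $\sigma(A)$, so by \eqref{boundedop} the operator $T(t):=e^{tA}=\int_{\sigma(A)}e^{t\lambda}\,dE_A(\lambda)$ is bounded with $\|T(t)\|\le 4Me^{t\omega}$. Two properties follow from the multiplicativity and normalization of the Borel operational calculus applied to the bounded functions $e^{t\lambda}$: first, $T(0)=I$ and $T(t)T(s)=T(t+s)$ for $s,t\ge 0$; second, $T(t)$ commutes with every spectral projection $E_A(\delta)$.

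The delicate step is strong continuity at $0$. Given $f\in X$ and $\eps>0$, exploit the strong $\sigma$-additivity of $E_A(\cdot)$ to pick $R>0$ so that, with $\sigma_R:=\{\lambda\in\sigma(A)\,|\,|\lambda|\le R\}$, one has $\|f-E_A(\sigma_R)f\|<\eps$. On the bounded set $\sigma_R$ the function $e^{t\lambda}-1$ converges to $0$ uniformly as $t\to 0+$, so by \eqref{boundedop}
\[
\|T(t)E_A(\sigma_R)f-E_A(\sigma_R)f\|\le 4M\esup_{\lambda\in\sigma_R}|e^{t\lambda}-1|\,\|E_A(\sigma_R)f\|\to 0.
\]
Since $\|T(t)\|$ is bounded on $t\in[0,1]$, splitting $T(t)f-f$ through the intermediate vector $T(t)E_A(\sigma_R)f$ together with the decomposition $f=E_A(\sigma_R)f+(I-E_A(\sigma_R))f$ yields $\|T(t)f-f\|\to 0$. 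Thus $\{T(t)\}_{t\ge 0}$ is a $C_0$-semigroup.

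Finally, let $B$ denote its generator; it remains to verify $B=A$. For $f\in D(A)$, Proposition~\ref{prop} gives integrability of $|\lambda|$ against $v(f,g^*,\cdot)$ for all $g^*\in X^*$. Writing
\[
\frac{T(t)f-f}{t}-Af=\int_{\sigma(A)}\left(\frac{e^{t\lambda}-1}{t}-\lambda\right)dE_A(\lambda)f,
\]
using the pointwise convergence $(e^{t\lambda}-1)/t\to\lambda$ and the dominating bound $|(e^{t\lambda}-1)/t|\le|\lambda|e^{t\omega_+}$ (with $\omega_+:=\max(\omega,0)$), and repeating the $\sigma_R$-truncation argument above with the $D(A)$-characterization \eqref{cond(ii)} controlling the tail, one obtains $\lim_{t\to 0+}\frac{T(t)f-f}{t}=Af$, so $A\subseteq B$. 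Both $A$ and $B$ are generators of $C_0$-semigroups (for $A$ we use the already-shown sufficiency), so both $\lambda-A$ and $\lambda-B$ are bijections of their respective domains onto $X$ for any $\lambda$ with $\Rep\lambda$ sufficiently large; this forces $A=B$. The main obstacle in the argument is precisely the passage to the limit under the spectral integral in the last step, handled uniformly in $g^*$ via Proposition~\ref{prop} and the quantitative bounds \eqref{cond(ii)}-\eqref{tv}.
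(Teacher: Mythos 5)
Your argument is correct, and since the paper itself gives no proof of this theorem (it is quoted from \cite{Markin2002(2)}), it can only be compared with the approach of that source, which it essentially matches: necessity from the Hille--Yosida spectral bound, sufficiency by defining $T(t)=e^{tA}$ through the Borel operational calculus, boundedness via \eqref{boundedop}, the semigroup property from multiplicativity, strong continuity at $0$ by truncation with $E_A(\sigma_R)$, and identification of the generator via Proposition \ref{prop} and \eqref{cond(ii)}--\eqref{tv}. One small repair in the last step: the phrase ``for $A$ we use the already-shown sufficiency'' is circular (that $A$ generates is exactly what is being proved); all you need is that $\lambda-A$ maps $D(A)$ bijectively onto $X$ for $\Rep\lambda>\omega$, which is immediate from $\lambda\in\rho(A)$, and then $A\subseteq B$ together with the injectivity of $\lambda-B$ forces $A=B$ (with the ``in which case $T(t)=e^{tA}$'' clause following from the uniqueness of the semigroup generated by a given operator).
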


Furthermore, scalar type spectral $C_0$-semigroups are generated by scalar type spectral operators \cite{Berkson1966,Panchapagesan1969}.

In \cite{Markin2002(2),Markin2004(1),Markin2008,Markin2016}, characterizations of
the \textit{analyticity}, \textit{immediate differentiability}, and \textit{Carleman}, in particular \textit{Gevrey ultradifferentiability}, of scalar type spectral $C_0$-semigroups are found also purely in terms of the location of the generator's spectrum in the complex plane. 
In particular, the characterizations \textit{analyticity} and \textit{immediate differentiability},
similarly to the above \textit{generation theorem}, are obtained from their general Hille–Yosida type counterparts \cite{Pazy1968,Yosida1958} (see also \cite{CrPaz1969})
by dropping the corresponding estimates on the generator's resolvent.


\subsection{Finer Spectrum Structure}\

The spectrum $\sigma(A)$ of a \textit{closed linear operator} $A$ in a complex Banach space $(X,\|\cdot\|)$ is partitioned into the following pairwise disjoint subsets:
\begin{equation*}
\begin{split}
& \sigma_p(A):=\left\{\lambda\in \C \,\middle|\,A-\lambda I\ \text{is \textit{not one-to-one}, i.e., $\lambda$ is an \textit{eigenvalue} of $A$} \right\},\\
& \sigma_c(A):=\left\{\lambda\in \C \,\middle|\,A-\lambda I\ \text{is \textit{one-to-one} and $R(A-\lambda I)\neq X$, but $\overline{R(A-\lambda I)}=X$} \right\},\\
& \sigma_r(A):=\left\{\lambda\in \C \,\middle|\,A-\lambda I\ \text{is \textit{one-to-one} and $\overline{R(A-\lambda I)}\neq X$} \right\}
\end{split}
\end{equation*}
($R(\cdot)$ is the \textit{range} of an operator), called the \textit{point}, \textit{continuous}, and \textit{residual spectrum} of $A$, respectively (see, e.g., \cite{Markin2020EOT}).
 
For a scalar type spectral operator $A$,
\begin{equation}\label{rspg}
\sigma_r(A)=\emptyset
\end{equation}
\cite[Corollary 3.1]{Markin2017} (see also \cite{Markin2006}).

\section{Norm Continuity}

\begin{thm}[Characterization of the Generation of Immediately Norm-Continuous Scalar Type Spectral $C_0$-Semigroups]\label{CGINCS}
A scalar type spectral operator $A$ in a complex Banach space generates an immediately norm continuous $C_0$-semigroup (of scalar type spectral operators) iff, for any $b\in \R$, the set
\begin{equation}\label{cinc}
\left\{\lambda\in\sigma(A) \,\middle|\, \Rep\lambda\ge b\right\}
\end{equation}
is bounded.
\end{thm}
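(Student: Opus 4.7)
The necessity is essentially free: an immediately norm-continuous $C_0$-semigroup is \textit{a fortiori} eventually norm-continuous, so Theorem~\ref{NCENC} directly gives the boundedness of the set \eqref{cinc} for every $b\in\R$.

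For the sufficiency, I would first invoke the Generation Theorem~\ref{GT}: the boundedness of $\left\{\lambda\in\sigma(A)\,\middle|\,\Rep\lambda\ge 0\right\}$ yields an $\omega\in\R$ with $\sigma(A)\subseteq\left\{\lambda\in\C\,\middle|\,\Rep\lambda\le\omega\right\}$, so $A$ generates a scalar type spectral $C_0$-semigroup $T(t)=\int_{\sigma(A)}e^{t\lambda}\,dE_A(\lambda)$, $t\ge 0$. The Borel functional calculus gives $T(t)-T(t_0)=\int_{\sigma(A)}(e^{t\lambda}-e^{t_0\lambda})\,dE_A(\lambda)$, and the right-hand inequality in \eqref{boundedop} bounds its operator norm by $4M\esup_{\lambda\in\sigma(A)}\left|e^{t\lambda}-e^{t_0\lambda}\right|$. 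The task therefore reduces to the scalar statement: for each $t_0>0$,
\[
\sup_{\lambda\in\sigma(A)}\left|e^{t\lambda}-e^{t_0\lambda}\right|\longrightarrow 0\quad\text{as}\quad t\to t_0.
\]

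I would establish this scalar statement by splitting $\sigma(A)$ along a vertical line $\Rep\lambda=b$ chosen sufficiently far to the left. Restricting $t$ to the compact interval $I:=[t_0/2,3t_0/2]$ and given $\eps>0$, I pick $b<0$ with $2e^{(t_0/2)b}<\eps/2$; then for any $\lambda\in\sigma(A)$ with $\Rep\lambda\le b$ and any $t\in I$, each of $\left|e^{t\lambda}\right|$ and $\left|e^{t_0\lambda}\right|$ is bounded by $e^{(t_0/2)b}$, so their difference is less than $\eps/2$. On the complementary piece $K_b:=\left\{\lambda\in\sigma(A)\,\middle|\,\Rep\lambda\ge b\right\}$, the hypothesis forces $K_b$ to be bounded, hence $\overline{K_b}$ compact, and $(t,\lambda)\mapsto e^{t\lambda}$ is uniformly continuous on the compact set $I\times\overline{K_b}$; this furnishes a $\delta>0$ with $\left|e^{t\lambda}-e^{t_0\lambda}\right|<\eps/2$ for all $\lambda\in K_b$ whenever $|t-t_0|<\delta$. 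Patching the two estimates gives $\|T(t)-T(t_0)\|\to 0$, and, since $t_0>0$ was arbitrary, the semigroup is immediately norm-continuous.

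No single step strikes me as genuinely difficult; the main point of care is in the splitting itself, which is a standard compactness-plus-exponential-decay pattern. Both of its ingredients come directly from the hypothesis: the compactness of $\overline{K_b}$ supplies uniform continuity of $(t,\lambda)\mapsto e^{t\lambda}$ on a fixed compact set, while the implicit fact that $\Rep\lambda\to-\infty$ as $|\lambda|\to\infty$ along $\sigma(A)$ supplies the exponential smallness in the far left half-plane. Without either ingredient, uniform control of $e^{t\lambda}-e^{t_0\lambda}$ on all of $\sigma(A)$ would fail; with both, no resolvent estimates are needed at all.
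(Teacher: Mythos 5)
Your proposal is correct, and its necessity half coincides with the paper's (immediate norm-continuity implies eventual, so Theorem \ref{NCENC} applies). For the sufficiency, both you and the paper exploit the same decomposition of $\sigma(A)$ along a vertical line: to the left of $\Rep\lambda=b$ the factor $e^{t\Rep\lambda}$ is uniformly exponentially small for $t$ bounded away from $0$, while the remaining set $\{\lambda\in\sigma(A)\,|\,\Rep\lambda\ge b\}$ is bounded by hypothesis. The implementations differ, however. The paper works at the operator level: it forms the truncated generators $A_n:=AE_A(\{\lambda\in\sigma(A)\,|\,\Rep\lambda\ge -n\})$, invokes the A.E.\ Weak Spectral Inclusion and Mapping Theorem of \cite{Markin2021(2)} to see that $\sigma(A_n)$ is bounded, so that $A_n$ is a bounded operator generating a uniformly continuous semigroup, identifies $e^{tA}E_A(\{\lambda\in\sigma(A)\,|\,\Rep\lambda\ge -n\})$ as a norm-continuous operator function on $[0,\infty)$, and then uses \eqref{boundedop} to show these functions converge to $T(\cdot)$ uniformly in operator norm on each $[t_0,\infty)$ with error $4Me^{-t_0n}$, whence norm-continuity of $T(\cdot)$ there. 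You instead bound $\|T(t)-T(t_0)\|\le 4M\sup_{\lambda\in\sigma(A)}\left|e^{t\lambda}-e^{t_0\lambda}\right|$ directly (legitimate, since both exponentials are bounded on $\sigma(A)$ and the calculus is additive for bounded functions, so the difference is $\int_{\sigma(A)}(e^{t\lambda}-e^{t_0\lambda})\,dE_A(\lambda)$, and $\esup\le\sup$), and prove the scalar uniform convergence by combining the exponential smallness on the far-left piece with uniform continuity of $(t,\lambda)\mapsto e^{t\lambda}$ on the compact set $I\times\overline{K_b}$. Your route is more elementary and self-contained: it needs only \eqref{boundedop} and avoids both the truncated generators and the auxiliary spectral inclusion theorem; the paper's version buys a somewhat more structural statement (an explicit uniform approximation of the semigroup by norm-continuous cut-off semigroups, with a quantitative rate), but for the theorem itself both arguments are equally conclusive. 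Two small points of hygiene in your write-up: take $\delta\le t_0/2$ so that $t$ indeed stays in $I$ when $|t-t_0|<\delta$, and state explicitly that the essential supremum with respect to $E_A$ is dominated by the supremum over $\sigma(A)$.
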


\begin{proof}\

{\it ``Only if"} part immediately follows from the \textit{Necessary Condition for the Generation of Eventually Norm-Continuous $C_0$-Semigroups} (Theorem \ref{NCENC}).

\smallskip
{\it ``If"} part. Suppose that a scalar type spectral operator $A$ in a complex Banach space with spectral measure $E_A(\cdot)$ satisfies condition \eqref{cinc}. Then there exists an $\omega \in \R$ such that
\[
\sigma(A)\subseteq \left\{\lambda\in\C \,\middle|\, \Rep\lambda\le \omega\right\},
\] 
and hence, by the \textit{Generation Theorem} (Theorem \ref{GT}),
$A$ generates a scalar type spectral $C_0$-semigroup
\[
T(t)=e^{tA}:=\int\limits_{\sigma(A)} e^{t\lambda}\,dE_A(\lambda),\ t\ge 0.
\]

The premise also implies that, for each $n\in \N$ ($\N:=\left\{ 1,2,\dots\right\}$ is the set of \textit{natural numbers}),  by the properties of the Borel operational calculus \cite[Theorem XVIII.$2.11$ (f)]{Dun-SchIII},
\begin{equation*}
A_n:=AE_A\left(\left\{\lambda\in\sigma(A) \,\middle|\, \Rep\lambda\ge -n\right\}\right)=\int\limits_{\sigma(A)}\lambda \chi_{\left\{\lambda\in\sigma(A) \,\middle|\, \Rep\lambda\ge -n\right\}}(\lambda)\,dE_A(\lambda)
\end{equation*}
($\chi_\delta(\cdot)$ is the {\it characteristic function} of a set $\delta\subseteq \C$)
is a \textit{bounded} scalar type spectral operator since its spactrum $\sigma(A_n)$ is a bounded set (see Preliminaries). Indeed, by the \textit{A.E. Weak Spectral Inclusion and Mapping Theorem} (\cite[Theorem $3.1$]{Markin2021(2)}), we have the inclusion
\begin{equation*}
\sigma(A_n)\subseteq \left\{\lambda\in\sigma(A) \,\middle|\, \Rep\lambda\ge -n\right\}\cup
\left\{0\right\}
\end{equation*}
(see also \cite{Markin2020(2)}).

Thus,  for each $n\in \N$, $A_n$ generates the \textit{uniformly continuous} semigroup $\left\{ e^{tA_n}\right\}_{t\ge 0}$ of its exponentials (see Preliminaries). 

By the properties of the Borel operational calculus \cite[Theorem XVIII.$2.11$]{Dun-SchIII},
\begin{align*}
e^{tA_n}&=
\int\limits_{\sigma(A)} e^{t\lambda \chi_{\left\{\lambda \in \sigma(A)\,
\middle|\,\Rep \lambda\ge -n\right\}}(\lambda)}\,dE_A(\lambda)
=\int\limits_{\sigma(A)} e^{t\lambda}\chi_{\left\{\lambda\in\sigma(A) \,\middle|\, \Rep\lambda\ge -n\right\}}(\lambda)\,dE_A(\lambda)
\\
&+\int\limits_{\sigma(A)} \chi_{\left\{\lambda\in\sigma(A) \,\middle|\, \Rep\lambda< -n\right\}}(\lambda)\,dE_A(\lambda)
\\
&=e^{tA}E_A\left(\left\{\lambda \in \sigma(A)\,\middle|\,\Rep\lambda\ge -n\right\}\right)
+E_A\left(\left\{\lambda \in \sigma(A)\,\middle|\,\Rep\lambda<-n\right\}\right),\  t\ge 0.
\end{align*}

Hence, for each $n\in \N$, by the properties of the Borel operational calculus
\cite[Theorem XVIII.$2.11$ (g)]{Dun-SchIII}, the operator function
\begin{align*}
[0,\infty)\ni t\mapsto &
E_A\left(\left\{\lambda\in\sigma(A) \,\middle|\, \Rep\lambda \ge  -n\right\}\right)e^{tA_n}
\\
&=E_A\left(\left\{\lambda \in \sigma(A)\,
\middle|\,\Rep\lambda\ge -n\right\}\right)e^{tA}E_A\left(\left\{\lambda \in \sigma(A)\,
\middle|\,\Rep\lambda\ge -n\right\}\right)\\
&=e^{tA}E_A\left(\left\{\lambda \in \sigma(A)\,
\middle|\,\Rep\lambda\ge -n\right\}\right)
\in L(X)
\end{align*} 
is \textit{continuous} relative to the \textit{operator norm} on $[0,\infty)$.

For arbitrary $t_0>0$ and $n\in \N$, we have:
\begin{multline*}
\sup_{t\ge t_0}\|T(t)-E_A\left(\left\{\lambda\in\sigma(A) \;\middle|\; \Rep\lambda \ge  -n\right\}\right)e^{tA_n}\|
\\
\shoveleft{
=\sup_{t\ge t_0}\|e^{tA}-E_A\left(\left\{\lambda\in\sigma(A) \;\middle|\; \Rep\lambda \ge  -n\right\}\right)e^{tA_n}\|
}\\
\shoveleft{
=\sup_{t\ge t_0}\|e^{tA}-e^{tA}E_A\left(\left\{\lambda \in \sigma(A)\,
\middle|\,\Rep\lambda\ge -n\right\}\right)\|
}\\
\shoveleft{
=\sup_{t\ge t_0}\|e^{tA}E_A\left(\left\{\lambda \in \sigma(A)\,
\middle|\,\Rep\lambda<-n\right\}\right)\|
}\\
\hfill
\text{by \eqref{boundedop};}
\\
\shoveleft{
=\sup_{t\ge t_0}
\left\|\int\limits_{\left\{\lambda\in\sigma(A) \,\middle|\, \Rep\lambda<-n\right\}}e^{t\lambda}\,dE_A(\lambda)\right\|
\le 4M \sup_{t\ge t_0}\sup_{\left\{\lambda\in\sigma(A) \,\middle|\, \Rep\lambda<-n\right\}}\left|e^{t\lambda}\right|
}\\
\ \ \
=4M \sup_{t\ge t_0}\sup_{\left\{\lambda\in\sigma(A) \,\middle|\, \Rep\lambda<-n\right\}}e^{t\Rep\lambda}
=4Me^{-t_0n}\to 0,\ n\to\infty,
\hfill
\end{multline*} 
where the constant $M\ge 1$ is from \eqref{bounded}.

Thus, for any $t_0>0$, the sequence of operator functions
\[
\left(E_A\left(\left\{\lambda\in\sigma(A) \,\middle|\, \Rep\lambda \ge  -n\right\}\right)e^{\cdot A_n}\right)_{n\in \N},
\]
continuous relative to the operator norm on $[0,\infty)$,
converges \textit{uniformly} to $T(\cdot)$ in the operator norm on $[t_0,\infty)$. This implies \textit{immediate norm continuity} for the semigroup $\left\{T(t)\right\}_{t\ge 0}$ and completes the proof.
\end{proof}

We directly obtain the following

\begin{cor}[Characterization of Immediately Norm-Continuous Scalar Type Spectral $C_0$-Semigroups]
A $C_0$-semigroup (of scalar type spectral operators) on a complex Banach space generated by a scalar type spectral operator $A$ is immediately norm-continuous iff, for any $b\in \R$, the set
\begin{equation*}
\left\{\lambda\in\sigma(A) \,\middle|\, \Rep\lambda\ge b\right\}
\end{equation*}
is bounded.
\end{cor}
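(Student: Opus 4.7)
The plan is to observe that this corollary is essentially a reformulation of Theorem \ref{CGINCS}, with the hypothesis that $A$ already generates a $C_0$-semigroup built in, rather than being part of the biconditional. Since the generator uniquely determines a $C_0$-semigroup, the two statements carry the same content; the corollary simply repackages it.

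First I would handle the \emph{``only if''} direction. Suppose the $C_0$-semigroup $\{T(t)\}_{t\ge 0}$ generated by $A$ is immediately norm-continuous. In particular, it is eventually norm-continuous, so applying Theorem \ref{NCENC} directly yields that for every $b\in\R$ the set $\{\lambda\in\sigma(A)\mid \Rep\lambda\ge b\}$ is bounded.

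For the \emph{``if''} direction, suppose the spectral condition holds. Since $A$ is a scalar type spectral operator and is assumed to generate a $C_0$-semigroup, Theorem \ref{CGINCS} applies verbatim: the same spectral condition is \emph{sufficient} for $A$ to generate an immediately norm-continuous $C_0$-semigroup (of scalar type spectral operators). By the uniqueness of the $C_0$-semigroup generated by $A$, the semigroup produced by Theorem \ref{CGINCS} coincides with $\{T(t)\}_{t\ge 0}$, which is therefore immediately norm-continuous.

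There is no real obstacle here; the only nuance worth mentioning explicitly is the appeal to uniqueness of the generated semigroup so that the ``if'' direction of Theorem \ref{CGINCS} transfers to the given semigroup rather than producing a potentially different one. Consequently the proof reduces to two short sentences invoking Theorems \ref{NCENC} and \ref{CGINCS}.
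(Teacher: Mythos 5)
Your proposal is correct and matches the paper's route: the paper obtains this corollary directly from Theorem \ref{CGINCS} (whose ``only if'' part is itself Theorem \ref{NCENC}), with the uniqueness of the semigroup generated by $A$ implicitly doing exactly the work you spell out. Making that uniqueness appeal explicit is the only difference, and it is a reasonable clarification rather than a deviation.
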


By the \textit{Necessary Condition for the Generation of Eventually Norm-Continuous $C_0$-Semigroups} (Theorem \ref{NCENC}), we also instantly arrive at 

\begin{cor}[Equivalence of Eventual and Immediate Norm-Continuity]\label{EEINC}\ \\
If a scalar type spectral $C_0$-semigroup in a complex Banach space is eventually norm-continuous, it is immediately norm-continuous.
\end{cor}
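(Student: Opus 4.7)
The plan is to observe that this corollary is an essentially immediate concatenation of two results already in place, so the proof will be short and the only real bookkeeping is making sure each hypothesis hands off to the next.

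First, I would start with a scalar type spectral $C_0$-semigroup $\{T(t)\}_{t\ge 0}$ that is eventually norm-continuous. The paper records (right after the Generation Theorem, citing Berkson and Panchapagesan) that every scalar type spectral $C_0$-semigroup is generated by a scalar type spectral operator $A$, so I am free to name such a generator.

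Next, since $A$ generates an eventually norm-continuous $C_0$-semigroup, the Necessary Condition for the Generation of Eventually Norm-Continuous $C_0$-Semigroups (Theorem \ref{NCENC}) applies and yields that for every $b\in\R$ the set
\[
\left\{\lambda\in\sigma(A) \,\middle|\, \Rep\lambda\ge b\right\}
\]
is bounded. This is precisely the spectral condition \eqref{cinc} in the Characterization Theorem \ref{CGINCS}.

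Finally, I would invoke the ``if'' direction of Theorem \ref{CGINCS}: because $A$ is a scalar type spectral operator and its spectrum satisfies the boundedness condition above, $A$ generates an immediately norm-continuous $C_0$-semigroup, which by the uniqueness of the semigroup associated with a given generator must be the original $\{T(t)\}_{t\ge 0}$. There is no genuine obstacle here; the only thing to be careful about is that we have used two results (Berkson--Panchapagesan, and Theorem \ref{CGINCS}) that are specific to the scalar type spectral setting, so the statement really does depend on that hypothesis and not merely on Theorem \ref{NCENC}.
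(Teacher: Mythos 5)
Your proposal is correct and follows exactly the paper's route: the paper obtains this corollary ``instantly'' by combining the Necessary Condition (Theorem \ref{NCENC}) with the ``if'' direction of the Characterization Theorem \ref{CGINCS}, using the Berkson--Panchapagesan fact that the generator is itself a scalar type spectral operator. Your write-up simply makes the hand-offs explicit, which is fine.
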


\section{Differentiability}

Combining the characterizations of the \textit{immediate differentiability} and \textit{Gevrey ultradifferentiability}, in particular \textit{analyticity}, of scalar type spectral $C_0$-semigroups found in \cite{Markin2002(2),Markin2004(1),Markin2016} (see also \cite{Markin2008}) with the \textit{Generation Theorem} (Theorem \ref{GT}) for such semigroups,  we instantly obtain the subsequent characterizations of the generation of such semigroups.

\begin{thm}[Characterization of the Generation of Immediately Differentiable Scalar Type Spectral $C_0$-Semigroups]
A scalar type spectral operator $A$ in a complex Banach space generates an immediately differentiable $C_0$-semigroup (of scalar type spectral operators) iff there exists an $\omega\in \R$ and, for any $b>0$, there exists an $a\in \R$ such that
\begin{equation*}
\sigma(A)\subseteq \left\{\lambda\in \C \,\middle|\,\Rep\lambda\le \min \left(\omega,a-b\ln|\Imp\lambda|\right) \right\}.
\end{equation*}
\end{thm}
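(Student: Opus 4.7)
The plan is to derive this characterization by splicing together two earlier results, with essentially no new analytic work required. The two ingredients are the \emph{Generation Theorem} (Theorem~\ref{GT}) and the author's earlier spectral characterization of the \emph{immediate differentiability} of scalar type spectral $C_0$-semigroups from \cite{Markin2002(2),Markin2004(1),Markin2016}, which asserts that, once $A$ is already known to generate such a semigroup, the semigroup is immediately differentiable iff for every $b>0$ there exists $a\in\R$ with
\[
\sigma(A)\subseteq\left\{\lambda\in\C\,\middle|\,\Rep\lambda\le a-b\ln|\Imp\lambda|\right\}.
\]

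For the \emph{only if} direction, I would first invoke Theorem~\ref{GT} on the hypothesis that $A$ generates a $C_0$-semigroup to produce an $\omega\in\R$ with $\sigma(A)\subseteq\{\lambda\in\C\mid\Rep\lambda\le\omega\}$. Then, since the generated semigroup is assumed immediately differentiable, the aforementioned spectral characterization delivers, for each $b>0$, a corresponding $a\in\R$ realizing the logarithmic envelope on $\sigma(A)$. Intersecting the two spectral constraints amounts to bounding $\Rep\lambda$ by the pointwise minimum of $\omega$ and $a-b\ln|\Imp\lambda|$, yielding the inclusion in the form stated.

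For the \emph{if} direction, the hypothesis already supplies an $\omega\in\R$ with $\sigma(A)\subseteq\{\lambda\in\C\mid\Rep\lambda\le\omega\}$, so Theorem~\ref{GT} produces a scalar type spectral $C_0$-semigroup $\{T(t)\}_{t\ge 0}$ with generator $A$. The remaining logarithmic portion of the hypothesis then feeds directly into the sufficiency half of the spectral characterization from \cite{Markin2002(2),Markin2004(1),Markin2016} to conclude that this semigroup is immediately differentiable.

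The main --- and essentially only --- obstacle is bookkeeping: one must verify that the statement, as written, simultaneously encodes the existence of a global $\omega$ governing \emph{generation} and the $b$-dependent $a$ governing \emph{immediate differentiability} via the single $\min$-inclusion, and that neither hypothesis is inadvertently strengthened or weakened when passing between the two characterizations. No new resolvent estimates or constructions on the semigroup itself are needed beyond citing the earlier work, which is precisely the point emphasized just before the theorem statement.
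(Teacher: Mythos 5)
Your proposal is correct and follows essentially the same route as the paper, which likewise obtains this theorem by combining the Generation Theorem (Theorem~\ref{GT}) with the earlier spectral characterization of immediate differentiability of scalar type spectral $C_0$-semigroups from \cite{Markin2002(2),Markin2004(1)}, the only work being the quantifier/min bookkeeping you describe. No discrepancy to report.
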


See \cite{Markin2004(1)} for details (cf. \cite{Pazy1968}).

\begin{thm}[Characterization of the Generation of Roumieu-type Gevrey Ultradifferentiable Scalar Type Spectral $C_0$-Semigroups]
Let $1\le \beta <\infty$. A scalar type spectral operator $A$ in a complex Banach space generates a $\beta$th-order Roumieu-type Gevrey ultradifferentiable on $(0,\infty)$ $C_0$-semigroup (of scalar type spectral operators) iff there exist $b>0$ and $a\in \R$ such that
\begin{equation*}
\sigma(A)\subseteq \left\{\lambda\in \C \,\middle|\,\Rep\lambda\le a-b|\Imp\lambda|^{1/\beta}\right\}.
\end{equation*}
\end{thm}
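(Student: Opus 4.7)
The plan is to derive the theorem as a direct combination of two already established results, mirroring the structure immediately preceding it (the analogous statement for immediate differentiability). Specifically, I will use the \textit{Generation Theorem} (Theorem \ref{GT}) together with the characterization of Roumieu-type Gevrey ultradifferentiability of scalar type spectral $C_0$-semigroups from \cite{Markin2016} (see also \cite{Markin2008}), which states that a scalar type spectral $C_0$-semigroup with generator $A$ is $\beta$th-order Roumieu-type Gevrey ultradifferentiable on $(0,\infty)$ iff the spectrum of $A$ lies in a parabolic-type region of the form $\{\lambda\in \C \mid \Rep\lambda\le a-b|\Imp\lambda|^{1/\beta}\}$ for some $b>0$ and $a\in \R$.

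For the ``only if'' part, I assume $A$ generates the $\beta$th-order Roumieu-type Gevrey ultradifferentiable $C_0$-semigroup. Since $A$ is a scalar type spectral operator generating a $C_0$-semigroup, the aforementioned characterization from \cite{Markin2016} yields the desired spectral inclusion with some $b>0$ and $a\in \R$ directly.

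For the ``if'' part, I first observe that the parabolic inclusion
\[
\sigma(A)\subseteq \left\{\lambda\in \C \,\middle|\,\Rep\lambda\le a-b|\Imp\lambda|^{1/\beta}\right\}
\]
automatically forces $\sigma(A)\subseteq \left\{\lambda\in \C \,\middle|\,\Rep\lambda\le a\right\}$, since $b|\Imp\lambda|^{1/\beta}\ge 0$. Consequently, the \textit{Generation Theorem} (Theorem \ref{GT}) applies with $\omega:=a$, guaranteeing that $A$ generates a scalar type spectral $C_0$-semigroup. Invoking the characterization from \cite{Markin2016} a second time, now in its sufficient direction, the generated semigroup is $\beta$th-order Roumieu-type Gevrey ultradifferentiable on $(0,\infty)$, which completes the proof.

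There is no genuine obstacle here; the argument is purely a packaging step, and the only point worth highlighting in writing is the observation that the parabolic condition subsumes the left half-plane condition required by the \textit{Generation Theorem}, so that no separate hypothesis on the existence of $\omega\in \R$ (as appears in the immediately preceding characterization for immediate differentiability) needs to be stipulated.
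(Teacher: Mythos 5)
Your proposal is correct and matches the paper's own route exactly: the paper also obtains this theorem ``instantly'' by combining the \textit{Generation Theorem} (Theorem \ref{GT}) with the previously established characterization of Roumieu-type Gevrey ultradifferentiability of scalar type spectral $C_0$-semigroups, and your observation that the parabolic region $\left\{\lambda\in\C \,\middle|\, \Rep\lambda\le a-b|\Imp\lambda|^{1/\beta}\right\}$ is already contained in the half-plane $\left\{\lambda\in\C \,\middle|\, \Rep\lambda\le a\right\}$ is precisely why no separate $\omega$ appears in the statement. The only slip is bibliographic: the Roumieu-type characterization you invoke comes from \cite{Markin2004(1)} (see also \cite{Markin2008}), whereas \cite{Markin2016} treats the Beurling-type case.
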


See \cite{Markin2004(1),Markin2008} (cf. \cite{CrPaz1969}).

In particular, for $\beta=1$, we arrive at

\begin{cor}[Characterization of the Generation of Analytic Scalar Type Spectral $C_0$-Semigroups]
A scalar type spectral operator $A$ in a complex Banach space generates an analytic $C_0$-semigroup (of scalar type spectral operators) iff there exist $b>0$ and $a\in \R$ such that
\begin{equation*}
\sigma(A)\subseteq \left\{\lambda\in \C \,\middle|\,\Rep\lambda\le a-b|\Imp\lambda|\right\}.
\end{equation*}
\end{cor}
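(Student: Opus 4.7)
The plan is to obtain this corollary as a direct specialization of the preceding \textit{Characterization of the Generation of Roumieu-type Gevrey Ultradifferentiable Scalar Type Spectral $C_0$-Semigroups} by setting $\beta = 1$. The crucial observation is the classical identification: a $C_0$-semigroup is \emph{analytic} (i.e., extends to a holomorphic operator-valued function on some sector containing $(0,\infty)$) if and only if it is \emph{Roumieu-type Gevrey ultradifferentiable of order $\beta = 1$} on $(0,\infty)$. This equivalence follows from the fact that the first-order Gevrey class on an interval coincides (up to the usual local refinements) with the class of real-analytic functions, applied to the orbit maps $t\mapsto T(t)f$, $f\in X$, combined with the standard fact that strong analyticity of the orbit maps on $(0,\infty)$ is equivalent to norm-analyticity of $T(\cdot)$ there and hence to the $C_0$-semigroup being analytic in the usual sense.

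Granting this identification, I would argue as follows. First, I would invoke the previous theorem with $\beta = 1$: a scalar type spectral operator $A$ generates a first-order Roumieu-type Gevrey ultradifferentiable $C_0$-semigroup (of scalar type spectral operators) if and only if there exist $b>0$ and $a\in\R$ such that
\begin{equation*}
\sigma(A)\subseteq \left\{\lambda\in\C \,\middle|\, \Rep\lambda\le a - b|\Imp\lambda|^{1/1}\right\}
= \left\{\lambda\in\C \,\middle|\, \Rep\lambda\le a - b|\Imp\lambda|\right\}.
\end{equation*}
Then, by the identification above, the left-hand condition is equivalent to $A$ generating an analytic $C_0$-semigroup of scalar type spectral operators, which finishes both implications.

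The only real item requiring some care is the equivalence of analyticity with first-order Gevrey ultradifferentiability, and this is a standard fact already used implicitly in \cite{Markin2004(1),Markin2008}; alternatively, one may bypass it entirely by noting that the spectral condition $\Rep\lambda \le a - b|\Imp\lambda|$ places $\sigma(A)$ inside a sector with vertex at $\lambda = a$ strictly to the left of some vertical line, and then the functional calculus integral
\[
T(t) = \int_{\sigma(A)} e^{t\lambda}\, dE_A(\lambda)
\]
extends holomorphically in $t$ to a sector about $(0,\infty)$ by the standard estimates from \eqref{boundedop}, producing the analytic semigroup directly; conversely, any analytic $C_0$-semigroup is in particular Gevrey of order $1$, so the necessity drops out of the previous theorem. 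This hybrid argument avoids appealing to the abstract equivalence and is the route I would take if the reader wanted a self-contained derivation.
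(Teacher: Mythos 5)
Your proposal is correct and follows essentially the same route as the paper: the corollary is obtained as the $\beta=1$ specialization of the Roumieu-type Gevrey theorem, with the identification of analyticity and first-order Roumieu--Gevrey regularity of the orbits handled by reference (the paper cites \cite{Markin2002(2)}, cf. \cite{Yosida1958}, for exactly this). Your alternative ``hybrid'' sketch via the sectorial bound on $e^{t\lambda}$ and the estimate \eqref{boundedop} is in substance the direct argument of \cite{Markin2002(2)}, so it adds self-containedness but not a genuinely different method.
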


See \cite{Markin2002(2)} (cf. \cite{Yosida1958}).

A similar characterization of the generation of the $\beta$th-order Beurling-type Gevrey ultradifferentiable on $(0,\infty)$ scalar type spectral $C_0$-semigroups  ($1<\beta<\infty$) is found in \cite{Markin2016}. 

For characterizations of the generation of the \textit{Carleman ultradifferentiable} scalar type spectral $C_0$-semigroups, see \cite{Markin2008,Markin2016}.

We obtain the following characterization of  the generation of \textit{eventually differentiable} scalar type spectral $C_0$-semigroups dropping the estimate on the resolvent of the semigroup's generator in its general Hille–Yosida type counterpart \cite[Theorem $2.5$]{Pazy1968}.

\begin{samepage}
\begin{thm}[Characterization of the Generation of Eventual Differentiable Scalar Type Spectral $C_0$-Semigroups]\label{CED} 
A scalar type spectral operator $A$ in a complex Banach space generates an eventually differentiable $C_0$-semigroup $\left\{T(t)\right\}_{t\ge 0}$ (of scalar type spectral operators) iff there exist $\omega\in \R$, $b>0$, and $a\in \R$ such that
\begin{equation}\label{ced1}
\sigma(A)\subseteq \left\{\lambda\in \C \,\middle|\,\Rep\lambda\le \min \left(\omega,a-b\ln|\Imp\lambda|\right) \right\},
\end{equation}
in which case the orbit maps
\begin{equation*}
[0,\infty)\ni t\mapsto T(t)f\in X
\end{equation*} 
are differentiable on $[b^{-1},\infty)$ for all $f\in X$.
\end{thm}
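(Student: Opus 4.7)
The plan is to treat the two directions separately, paralleling the pattern established in \cite{Markin2002(2),Markin2004(1)} for the immediate case, namely, to obtain the characterization by stripping the resolvent estimate from the corresponding Hille--Yosida type criterion \cite[Theorem~$2.5$]{Pazy1968}.

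For the ``only if'' direction I would simply invoke the general theorem of Pazy. Its spectral ingredient already has precisely the stated logarithmic shape, with $b^{-1}$ playing the role of the time past which the semigroup becomes differentiable; discarding the accompanying resolvent estimate $\|R(\lambda,A)\|\le C|\Imp\lambda|$ yields the desired necessary spectral inclusion.

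For the ``if'' direction, since $\sigma(A)\subseteq\{\Rep\lambda\le\omega\}$, the \emph{Generation Theorem} (Theorem~\ref{GT}) immediately produces the scalar type spectral $C_0$-semigroup
\[
T(t)=\int_{\sigma(A)}e^{t\lambda}\,dE_A(\lambda),\quad t\ge 0.
\]
The crux is to verify that $\lambda\mapsto \lambda e^{t\lambda}$ is $E_A$-essentially bounded on $\sigma(A)$ for every $t\ge b^{-1}$. Using $|\lambda|\le|\Rep\lambda|+|\Imp\lambda|$ I would split $|\lambda|e^{t\Rep\lambda}$ into two terms. The first, $|\Rep\lambda|e^{t\Rep\lambda}$, is bounded on $(-\infty,\omega]$ by the elementary inequality $|x|e^{tx}\le\max\{|\omega|e^{t\omega},(te)^{-1}\}$, independent of the logarithmic hypothesis. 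The second, $|\Imp\lambda|e^{t\Rep\lambda}$, is trivially bounded by $e^{t\omega}$ on $\{|\Imp\lambda|\le 1\}$, while on $\{|\Imp\lambda|>1\}$ the hypothesis gives $e^{t\Rep\lambda}\le e^{ta}|\Imp\lambda|^{-tb}$, so $|\Imp\lambda|e^{t\Rep\lambda}\le e^{ta}|\Imp\lambda|^{1-tb}\le e^{ta}$ because $tb\ge 1$. With essential boundedness secured, \cite[Theorem XVIII.$2.11$]{Dun-SchIII} gives $AT(t)=\int_{\sigma(A)}\lambda e^{t\lambda}\,dE_A(\lambda)\in L(X)$ for every $t\ge b^{-1}$, whence $T(t)X\subseteq D(A)$ on $[b^{-1},\infty)$.

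To pass from $T(t)X\subseteq D(A)$ to strong differentiability I would invoke the standard $C_0$-semigroup fact that $T(t_0)f\in D(A)$ produces the right derivative $AT(t_0)f$; for $t>b^{-1}$ the left derivative is then recovered from the identity
\[
\frac{T(t)f-T(t-h)f}{h}=\frac{1}{h}\int_{t-h}^{t}AT(u)f\,du
\]
together with the norm continuity of $t\mapsto AT(t)$ on $(b^{-1},\infty)$, itself a consequence of Lebesgue dominated convergence applied in the Borel operational calculus to the same pointwise bound on $\lambda e^{t\lambda}$, uniform on compact subsets of $(b^{-1},\infty)$. The main obstacle I anticipate is the careful bookkeeping of the essential-boundedness estimate across the regimes $\Rep\lambda\to-\infty$, $\Rep\lambda$ moderate with $|\Imp\lambda|$ large, and $|\Imp\lambda|$ small, together with pinning down differentiability at the precise endpoint $t=b^{-1}$, where the exponent $1-tb$ attains its critical value $0$; the remainder is a routine exercise in the Borel operational calculus of \cite{Dun-SchIII}.
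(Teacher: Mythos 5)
Your proposal is correct, and the ``only if'' half coincides with the paper's (both simply strip the resolvent estimate from \cite[Theorem~$2.5$]{Pazy1968}), but your ``if'' half takes a genuinely different route to the key inclusion \eqref{incl}. You observe that under \eqref{ced1} the function $\lambda\mapsto\lambda e^{t\lambda}$ is (essentially) \emph{bounded} on $\sigma(A)$ for every $t\ge b^{-1}$ --- splitting $|\lambda|e^{t\Rep\lambda}\le|\Rep\lambda|e^{t\Rep\lambda}+|\Imp\lambda|e^{t\Rep\lambda}$ and using $|\Imp\lambda|^{1-tb}\le 1$ when $|\Imp\lambda|>1$ and $tb\ge 1$ --- so that the boundedness criterion \eqref{boundedop} together with the product rule \cite[Theorem XVIII.$2.11$\,(f)]{Dun-SchIII} yields $Ae^{tA}\in L(X)$, hence $e^{tA}X\subseteq D(A)$ for all $t\ge b^{-1}$. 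The paper instead fixes $t_0=b^{-1}$ and verifies, for each $f\in X$ and $g^*\in X^*$, conditions (i) and (ii) of Proposition~\ref{prop} for the same function via the total-variation estimates \eqref{cond(ii)} and \eqref{tv}, concluding $f\in D\bigl(Ae^{b^{-1}A}\bigr)$, and then cites \cite{Pazy1968} for the passage from \eqref{incl} to differentiability on $[b^{-1},\infty)$, a step you re-derive by the standard right-derivative/integral argument. Your route is shorter and arguably more transparent here, and it gives a bit more for free (boundedness of $AT(t)$ and, with locally uniform bounds in $t$, norm continuity of $t\mapsto AT(t)$ on $[b^{-1},\infty)$); the paper's measure-theoretic verification is the author's general machinery, which is what one must fall back on in the companion results (immediate differentiability, Gevrey classes) where the relevant function is not essentially bounded, but in the present situation both arguments establish exactly the same inclusion and endpoint $t_0=b^{-1}$.
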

\end{samepage}

\begin{proof}\

{\it ``Only if''} part immediately follows from \cite[Theorem $2.5$]{Pazy1968}.

\smallskip
{\it ``If''} part. Suppose that, for a scalar type spectral operator $A$ in a complex Banach space $(X,\|\cdot\|)$  with spectral measure $E_A(\cdot)$, inclusion \eqref{ced1} holds with some $\omega\in \R$, $b>0$, and $a\in \R$.

To prove that the $C_0$-semigroup of exponentials 
$\left\{e^{tA}\right\}_{t\ge 0}$ generated by $A$ (see Preliminaries) is eventually differentiable is to show that
\begin{equation}\label{incl}
\exists\, t_0>0:\ e^{t_0A}X\subseteq D(A)
\end{equation}
\cite{Pazy1968}.

For arbitrary $f\in X$ and $g^*\in X^*$,
\begin{align*}
\int\limits_{\sigma(A)} |\lambda|e^{b^{-1}\Rep\lambda}\,dv(f,g^*,\lambda)
&=\int\limits_{\left\{\lambda\in\sigma(A) \,\middle|\, \Rep\lambda<0\right\}} |\lambda|e^{b^{-1}\Rep\lambda}\,dv(f,g^*,\lambda)\\
&+\int\limits_{\left\{\lambda\in\sigma(A) \,\middle|\, \Rep\lambda\ge 0\right\}} |\lambda|e^{b^{-1}\Rep\lambda}\,dv(f,g^*,\lambda).
\end{align*}

Indeed, 
\[
\int\limits_{\left\{\lambda\in\sigma(A) \,\middle|\, \Rep\lambda\ge 0\right\}} |\lambda|e^{b^{-1}\Rep\lambda}\,dv(f,g^*,\lambda)
\]
due to the boundedness of the set
\[
\left\{\lambda\in\sigma(A) \,\middle|\, \Rep\lambda\ge 0\right\},
\]
which follows from inclusion \eqref{ced1}, the continuity of the integrated function on $\C$, and the finiteness of the measure $v(f,g^*,\cdot)$.

Further, for any $f\in X$ and $g^*\in X^*$,
\begin{multline}\label{i}
\int\limits_{\left\{\lambda\in\sigma(A) \,\middle|\, \Rep\lambda<0\right\}} |\lambda|e^{b^{-1}\Rep\lambda}\,dv(f,g^*,\lambda)
\le 
\int\limits_{\sigma(A)}
\left(|\Rep\lambda|+|\Imp\lambda|\right)e^{b^{-1}\Rep\lambda}\,dv(f,g^*,\lambda)
\\
\hfill
\text{since, by \eqref{ced1}, for $\lambda\in \left\{\lambda\in\sigma(A) \,\middle|\, \Rep\lambda< 0\right\}$,}
\\
\hfill
\text{$\Rep\lambda <  0$ and $|\Imp\lambda|\le e^{b^{-1}(a-\Rep\lambda)}$;}
\\
\shoveleft{
\le \int\limits_{\left\{\lambda\in\sigma(A) \,\middle|\, \Rep\lambda<0\right\}}
\left[-\Rep\lambda+e^{b^{-1}(a-\Rep\lambda)}\right]e^{b^{-1}\Rep\lambda}\,
dv(f,g^*,\lambda)
}\\
\hfill
\text{since $x \le e^x$, $x\ge 0$;}
\\
\shoveleft{
\le \int\limits_{\left\{\lambda\in\sigma(A) \,\middle|\, \Rep\lambda<0\right\}}
\left[be^{b^{-1}(-\Rep\lambda)}
+e^{ab^{-1}}e^{b^{-1}(-\Rep\lambda)}\right]e^{b^{-1}\Rep\lambda}\,dv(f,g^*,\lambda)
}\\
\shoveleft{
=\left[b+e^{ab^{-1}}\right]\int\limits_{\left\{\lambda\in\sigma(A) \,\middle|\, \Rep\lambda<0\right\}} 1\,dv(f,g^*,\lambda)
\le \left[b+e^{ab^{-1}}\right]\int\limits_{\sigma(A)} 1\,dv(f,g^*,\lambda)
}\\
\shoveleft{
=\left[b+e^{ab^{-1}}\right]v(f,g^*,\sigma(A))
}\\
\hfill
\text{by \eqref{tv};}
\\
\ \ \,
\le 4M\left[b+e^{ab^{-1}}\right]\|f\|\|g^*\|.
\hfill
\end{multline}

Also, for any $f\in X$,
\begin{multline}\label{ii}
\sup_{\{g^*\in X^*\,|\,\|g^*\|=1\}}
\int\limits_{\left\{\lambda\in\sigma(A)\,\middle|\,|\lambda|e^{b^{-1}\Rep\lambda}>n\right\}}
|\lambda|e^{b^{-1}\Rep\lambda}\,dv(f,g^*,\lambda)
\\
\shoveleft{
\le \sup_{\{g^*\in X^*\,|\,\|g^*\|=1\}}\int\limits_{\left\{\lambda\in\sigma(A)\,\middle|\,\Rep\lambda<0,\ |\lambda|e^{b^{-1}\Rep\lambda}>n\right\}}
|\lambda|e^{b^{-1}\Rep\lambda}\,dv(f,g^*,\lambda)
}\\
\shoveleft{
+ \sup_{\{g^*\in X^*\,|\,\|g^*\|=1\}}\int\limits_{\left\{\lambda\in\sigma(A)\,\middle|\,\Rep\lambda\ge 0,\ |\lambda|e^{b^{-1}\Rep\lambda}>n\right\}}
|\lambda|e^{b^{-1}\Rep\lambda}\,dv(f,g^*,\lambda)
}\\
\hspace{1.2cm}
\to 0,\ n\to\infty.
\hfill
\end{multline}

Indeed, since, due to the boundedness of the set
\[
\left\{\lambda\in\sigma(A) \,\middle|\, \Rep\lambda\ge 0\right\}
\]
and the continuity of the integrated function 
on $\C$, the set
\[
\left\{\lambda\in\sigma(A)\,\middle|\,\Rep\lambda\ge 0,\ 
|\lambda|e^{b^{-1}\Rep\lambda}>n\right\}
\]
is \textit{empty} for all sufficiently large $n\in \N$,
we immediately infer that
\[
\lim_{n\to\infty}\sup_{\{g^*\in X^*\,|\,\|g^*\|=1\}}\int\limits_{\left\{\lambda\in\sigma(A)\,\middle|\,\Rep\lambda\ge 0,\ 
|\lambda|e^{b^{-1}\Rep\lambda}>n\right\}}
|\lambda|e^{b^{-1}\Rep\lambda}\,dv(f,g^*,\lambda)=0.
\]

Finally, as in \eqref{i}, for an arbitrary $f\in X$, we have:
\begin{multline*}
\sup_{\{g^*\in X^*\,|\,\|g^*\|=1\}}\int\limits_{\left\{\lambda\in\sigma(A)\,\middle|\,\Rep\lambda<0,\ |\lambda|e^{b^{-1}\Rep\lambda}>n\right\}}
|\lambda|e^{b^{-1}\Rep\lambda}\,dv(f,g^*,\lambda)
\\
\shoveleft{
\le\sup_{\{g^*\in X^*\,|\,\|g^*\|=1\}}
\left[b+e^{ab^{-1}}\right]\int\limits_{\left\{\lambda\in\sigma(A)\,\middle|\,\Rep\lambda<0,\ |\lambda|e^{b^{-1}\Rep\lambda}>n\right\}}
1\,dv(f,g^*,\lambda)
}\\
\hfill
\text{by \eqref{cond(ii)};}
\\
\shoveleft{
\le \left[b+e^{ab^{-1}}\right]\cdot
}\\
\shoveleft{
\cdot\sup_{\left\{g^*\in X^*\,|\,\|g^*\|=1\right\}}
4M\|E_A(\{\lambda\in\sigma(A)\,|\,\Rep\lambda<0,\ |\lambda|e^{b^{-1}\Rep\lambda}>n\})f\|\|g^*\|
}\\
\shoveleft{
=4M\left[b+e^{ab^{-1}}\right]\|E_A(\{\lambda\in\sigma(A)\,|\,\Rep\lambda<0,\ |\lambda|e^{b^{-1}\Rep\lambda}>n\})f\|
}\\
\hfill
\text{by the strong continuity of the {\it spectral measure};}
\\
\ \ \,
\to 4M\left[b+e^{ab^{-1}}\right]\left\|E_A\left(\emptyset\right)f\right\|=0,\ n\to \infty.
\hfill
\end{multline*}

By Proposition \ref{prop} and the properties of the \textit{operational calculus} (see {\cite[Theorem XVIII.$2.11$ (f)]{Dun-SchIII}}), \eqref{i} and \eqref{ii} jointly imply that, 
\[
f\in D\left(Ae^{b^{-1}A}\right),
\]
and hence, 
\begin{equation*}
e^{b^{-1}A}f\in D(A),
\end{equation*}
i.e., inclusion \eqref{incl} holds for $t_0=b^{-1}>0$, which implies that the orbit maps
\begin{equation*}
[0,\infty)\ni t\mapsto T(t)f\in X
\end{equation*} 
are differentiable on $[b^{-1},\infty)$ for all $f\in X$.
\end{proof}

Instantly, we obtain the following

\begin{cor}[Characterization of Eventually Differentiable Scalar Type Spectral $C_0$-Semigroups] A $C_0$-semigroup $\left\{T(t)\right\}_{t\ge 0}$ (of scalar type spectral operators) on a complex Banach space generated by a scalar type spectral operator $A$ is eventually differentiable iff there exist $b>0$ and $a\in \R$ such that
\begin{equation*}
\sigma(A)\subseteq \left\{\lambda\in \C \,\middle|\,\Rep\lambda\le a-b\ln|\Imp\lambda| \right\},
\end{equation*}
in which case the orbit maps
\begin{equation*}
[0,\infty)\ni t\mapsto T(t)f\in X
\end{equation*} 
are differentiable on $[b^{-1},\infty)$ for all $f\in X$.
\end{cor}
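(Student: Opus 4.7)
The plan is to derive the corollary as an immediate consequence of Theorem \ref{CED} combined with the \textit{Generation Theorem} (Theorem \ref{GT}). The key observation is that, since $A$ is already assumed to generate a $C_0$-semigroup, its spectrum $\sigma(A)$ is automatically confined to some left half-plane $\{\lambda\in\C : \Rep\lambda\le \omega\}$ by Theorem \ref{GT}. Consequently, the $\omega$-bound appearing in the hypothesis of Theorem \ref{CED} becomes redundant under the standing hypothesis of generation and can simply be dropped from the statement.

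For the \emph{only if} direction, starting from eventual differentiability of $\{T(t)\}_{t\ge 0}$, I would directly invoke Theorem \ref{CED} to obtain some $\omega\in \R$, $b>0$, and $a\in \R$ such that $\sigma(A)\subseteq \{\lambda\in\C : \Rep\lambda\le \min(\omega, a-b\ln|\Imp\lambda|)\}$, and then weaken to the inclusion $\sigma(A)\subseteq \{\lambda\in\C : \Rep\lambda\le a-b\ln|\Imp\lambda|\}$ stated in the corollary.

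For the \emph{if} direction, assume the inclusion $\sigma(A)\subseteq \{\lambda\in\C : \Rep\lambda\le a-b\ln|\Imp\lambda|\}$ with $b>0$ and $a\in\R$. Since $A$ generates a $C_0$-semigroup, Theorem \ref{GT} supplies an $\omega\in \R$ with $\sigma(A)\subseteq \{\lambda\in\C : \Rep\lambda\le \omega\}$. Intersecting these two regions recovers the full hypothesis of Theorem \ref{CED}, whose conclusions — eventual differentiability of $\{T(t)\}_{t\ge 0}$ and differentiability of the orbit maps $[0,\infty)\ni t\mapsto T(t)f\in X$ on $[b^{-1},\infty)$ for all $f\in X$ — then transfer verbatim to the present setting.

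No substantive obstacle is anticipated: the corollary is essentially a repackaging of Theorem \ref{CED} in which an a priori redundant bound is eliminated by appealing to the \textit{Generation Theorem}, and the entire argument reduces to intersecting two inclusions for $\sigma(A)$.
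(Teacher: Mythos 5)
Your proposal is correct and is exactly the paper's intended derivation: the paper states the corollary as following ``instantly'' from Theorem \ref{CED}, the point being precisely that the $\omega$-bound is supplied for free by the Generation Theorem (Theorem \ref{GT}) once $A$ is assumed to generate the semigroup, so the two spectral inclusions are intersected in the ``if'' direction and the $\min$ is simply weakened in the ``only if'' direction. Nothing further is needed.
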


Since an eventually differentiable $C_0$-semigroup is eventually norm-continuous, by the \textit{Equivalence of Eventual and Immediate Norm-Continuity} (Corollary \ref{EEINC}), we also arrive at

\begin{cor}[Eventual Differentiability Implies Immediate Norm-Continuity]
If a scalar type spectral $C_0$-semigroup is eventually differentiable, it is immediately norm-continuous.
\end{cor}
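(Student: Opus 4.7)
The plan is to chain together two facts that are already established earlier in the excerpt. First, I would invoke the observation recorded in Remarks~\ref{remsed} (following \cite[Lemma $2.1$]{Pazy1968}; see also \cite[Theorem $10.3.5$]{Hille-Phillips}): any $C_0$-semigroup that is strongly differentiable on $[t_0,\infty)$ is automatically norm-continuous on $[t_0,\infty)$. Applied to an eventually differentiable semigroup, this yields that the semigroup in question is eventually norm-continuous.

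Second, I would invoke Corollary~\ref{EEINC} (\textit{Equivalence of Eventual and Immediate Norm-Continuity}), which is the scalar-type-spectral-specific statement that already sits in this section. Since the semigroup under consideration is assumed to be of scalar type spectral operators, Corollary~\ref{EEINC} upgrades eventual norm-continuity to immediate norm-continuity. Concatenating the two implications closes the argument.

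The main potential pitfall is purely cosmetic rather than mathematical: I must be careful that the hypothesis of Corollary~\ref{EEINC} is met, namely that the semigroup is a \emph{scalar type spectral} $C_0$-semigroup. This is given, so no genuine obstacle arises. No calculation with the spectral measure $E_A(\cdot)$, no appeal to the \textit{Generation Theorem} (Theorem~\ref{GT}), and no re-examination of the spectrum of $A$ is needed; the statement is a one-step formal consequence of Remarks~\ref{remsed} and Corollary~\ref{EEINC}. Consequently, the proof should be essentially a single sentence, with the only reference-tracking burden being to cite these two results explicitly.
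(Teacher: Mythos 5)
Your proposal is correct and matches the paper's own argument: the paper likewise notes that an eventually differentiable $C_0$-semigroup is eventually norm-continuous (the fact recorded in Remarks on differentiability, via Pazy) and then applies the \emph{Equivalence of Eventual and Immediate Norm-Continuity} (Corollary \ref{EEINC}). Nothing further is needed.
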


\section{Compactness}

Let us first prove the following characterization of compactness for the scalar type spectral operators emerging directly from the classical \textit{Riesz-Schauder Theorem}
(see, e.g., \cite{Markin2020EOT}).

\begin{lem}[Characterization of Compactness for Scalar Type Spectral Operators]\label{CSTSO}
A scalar type spectral operator $A\neq 0$ in a complex infinite-dimensional Banach space is compact iff the spectrum $\sigma(A)$ of $A$ consists of $0$ and a nonempty countable set of nonzero eigenvalues with finite geometric multiplicities.

If the set $\sigma(A)\setminus \left\{0\right\}$ is countably infinite, for its arbitrary arrangement $\left\{ \lambda_n\right\}_{n=1}^\infty$ (i.e., $\N\ni n\mapsto \lambda_n\in \sigma(A)\setminus \left\{0\right\}$ is a bijection),
\begin{equation*}
\lambda_n\to 0,\ n\to \infty.
\end{equation*}
\end{lem}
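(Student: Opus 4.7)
The plan is to prove the ``only if'' direction by invoking the classical Riesz–Schauder theorem, and the ``if'' direction by exhibiting $A$ as a norm-limit of finite-rank spectral truncations via the fundamental estimate \eqref{boundedop}. The final claim $\lambda_n\to 0$ will then fall out of the spectrum structure in either direction.

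For the \emph{``only if''} part, assume $A$ is compact and $A\ne 0$. I would first observe $0\in\sigma(A)$: a compact operator on an infinite-dimensional Banach space cannot be boundedly invertible, for otherwise $I$ would be compact. Next, $\sigma(A)\setminus\{0\}\ne\emptyset$, since otherwise the scalar type spectral representation would give $A=\int_{\{0\}}\lambda\,dE_A(\lambda)=0$, contradicting $A\ne 0$. The Riesz–Schauder theorem then presents $\sigma(A)\setminus\{0\}$ as an (at most) countable set of isolated eigenvalues of finite algebraic multiplicity with $0$ as the only possible accumulation point. For a scalar type spectral operator, algebraic and geometric multiplicities of each eigenvalue $\lambda$ both equal $\dim R(E_A(\{\lambda\}))$, since $A$ acts as $\lambda I$ on $R(E_A(\{\lambda\}))$ and no Jordan chains arise. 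The assertion $\lambda_n\to 0$ for every enumeration of a countably infinite $\sigma(A)\setminus\{0\}$ is then immediate from $0$ being the sole accumulation point of this closed countable set.

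For the \emph{``if''} part, I would arrange the distinct nonzero eigenvalues $\{\lambda_n\}$ in order of nonincreasing modulus and set
\[
A_N:=\sum_{n=1}^N \lambda_n E_A(\{\lambda_n\}),\ N\in\N,
\]
each $A_N$ being finite-rank by the finite-multiplicity hypothesis. By the Borel operational calculus,
\[
A-A_N=\int_{\sigma(A)\setminus\{0,\lambda_1,\dots,\lambda_N\}}\lambda\,dE_A(\lambda),
\]
so the upper bound in \eqref{boundedop} yields $\|A-A_N\|\le 4M\sup_{n>N}|\lambda_n|$. This supremum tends to zero because, $\sigma(A)$ being closed and containing no spectral points other than $0$ and the $\lambda_n$, the nonzero eigenvalues can have no nonzero accumulation point; thus $A$ appears as a norm-limit of finite-rank operators, hence compact.

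The main delicate point, and my expected obstacle, is the ``if'' direction's tacit need for the $\lambda_n$ to accumulate only at $0$, required for the supremum in the estimate above to vanish. Extracting this from the bare structural hypothesis proceeds via closedness of $\sigma(A)$: any accumulation point of $\{\lambda_n\}$ must lie in $\sigma(A)=\{0\}\cup\{\lambda_n\}$, so one must carefully reconcile this with the countability and finite-multiplicity hypotheses to rule out clustering at some $\lambda_m$. Once this accumulation-point issue is pinned down, the remainder is a short calculation invoking \eqref{boundedop} and the properties of the Borel operational calculus.
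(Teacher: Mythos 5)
Your ``only if'' part is essentially the paper's own argument: Riesz--Schauder plus the observation that the integral representation forces $\sigma(A)\setminus\{0\}\neq\emptyset$, and it is fine. The genuine gap is in the ``if'' part, exactly at the point you flag and then leave open: the claim that the nonzero eigenvalues ``can have no nonzero accumulation point'' does \emph{not} follow from closedness of $\sigma(A)$ together with countability and finite geometric multiplicities, and it cannot be ``pinned down'' from those hypotheses, because without an extra assumption it is false. Consider the diagonal (hence normal, hence scalar type spectral) operator on $\ell^2$ with pairwise distinct diagonal entries $0$, $1$, and $1+\tfrac{1}{n}$, $n\in\N$: its spectrum is $\{0\}\cup\{1\}\cup\{1+\tfrac{1}{n}\,|\,n\in\N\}$, i.e., $0$ together with a countably infinite set of nonzero eigenvalues, each of geometric multiplicity one (the accumulation point $1$ is itself one of the eigenvalues, so closedness is not violated), yet the operator is plainly not compact. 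Consequently $\sup_{n>N}|\lambda_n|$ need not tend to $0$, your enumeration ``in order of nonincreasing modulus'' need not even exist, and the approximation of $A$ by the finite-rank truncations $A_N$ in the operator norm breaks down.

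The intended reading of the lemma --- and the one the paper's proof actually uses --- is that the condition $\lambda_n\to 0$ for an arbitrary arrangement of $\sigma(A)\setminus\{0\}$ (in the countably infinite case) is part of the characterization, not a consequence to be derived in the sufficiency direction: the paper's ``if'' proof invokes it verbatim (``by the premise, $\lambda_n\to 0$'') and only then runs the very estimate you wrote, $\bigl\|A-\sum_{k=1}^{n}\lambda_k E_A(\{\lambda_k\})\bigr\|\le 4M\sup_{k\ge n+1}|\lambda_k|\to 0$, with the finite case disposed of by finite-rank-ness. So your computation is the right one, but as written your sufficiency argument targets a statement that is false; you must take $\lambda_n\to 0$ as a hypothesis (equivalently, treat the second paragraph of the lemma as part of the ``iff'') rather than attempt to extract it from countability, finite multiplicity, and closedness of the spectrum.
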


\begin{proof}\

\textit{``Only if''} part follows directly from the \textit{Riesz-Schauder Theorem}
(see, e.g., \cite{Markin2020EOT}) in view of the fact that, for a scalar type spectral operator $A\neq 0$ with spectral measure $E_A(\cdot)$, due to the integral representation
\begin{equation*}
A=\int\limits_{\sigma(A)} \lambda\,dE_A(\lambda)
\end{equation*}
(see Preliminaries),
\[
\sigma(A)\setminus \left\{0\right\}\neq \emptyset.
\]

\textit{``If''} part. Suppose that, a scalar type spectral operator $A\neq 0$ in a complex Banach space $(X,\|\cdot\|)$ with spectral measure $E_A(\cdot)$ is subject to the conditions of the \textit{``if''} part. Then the spectrum $\sigma(A)$ is a \textit{bounded} set, which implies that $A$ is \textit{bounded} (see Preliminaries).

If the set 
\[
\sigma(A)\setminus \left\{0\right\}\subseteq \sigma_p(A)
\]
is finite, then
\[
\sigma(A)\setminus \left\{0\right\}=\left\{\lambda_1,\dots,\lambda_n  \right\}
\]
with some $n\in \N$. This implies, that
\begin{equation*}
A=\int\limits_{\sigma(A)} \lambda\,dE_A(\lambda)
=\sum_{k=1}^n \lambda_kE_A\left(\left\{ \lambda_k\right\}\right).
\end{equation*}

Whence, in view of the fact that each $\lambda_k$, $k=1,\dots,n$, is an eigenvalue with a finite geometric multiplicity, i.e., the subspace $E_A\left(\left\{ \lambda_k\right\}\right)X$,
$k=1,\dots,n$, is \textit{finite-dimensional},  we infer that the operator $A$ is of \textit{finite rank}, and hence, compact.

Now suppose that the set 
\[
\sigma(A)\setminus \left\{0\right\}\subseteq \sigma_p(A)
\]
is countably infinite and let $\left\{ \lambda_n\right\}_{n=1}^\infty$ is its arbitrary arrangement. Then, by the premise,
\[
\lambda_n\to 0,\ n\to \infty.
\]

Whence, by the properties of the Borel operational calculus \cite{Dun-SchIII}, 
we infer that, for any $n\in \N$,
\begin{multline*}
\left\|A-\sum_{k=1}^n \lambda_kE_A\left(\left\{ \lambda_k\right\}\right)\right\|
=\left\|\int\limits_{\sigma(A)\setminus \left\{ \lambda_1,\dots,\lambda_n\right\}} \lambda\,dE_A(\lambda)\right\|
\\
\hfill
\text{by \eqref{boundedop};}
\\
\ \
\le 4M\sup_{k\ge n+1}|\lambda_k|\to 0,\ n\to \infty.
\hfill
\end{multline*}

Therefore, the operator $A$ is \textit{compact} as the \textit{uniform limit} of the sequence 
\[
\left(\sum_{k=1}^n \lambda_kE_A\left(\left\{ \lambda_k\right\}\right)\right)_{n\in \N}
\]
of finite-rank operators (see, e.g., \cite{Markin2020EOT}).

With all possible cases considered, the proof of the \textit{``if''} part, and hence, of the entire statement, is complete.
\end{proof}

\begin{thm}[Characterization of the Generation of Immediately Compact Scalar Type Spectral $C_0$-Semigroups]\label{CIC}
A scalar type spectral operator $A\neq 0$ in a complex infinite-dimensional Banach space generates an immediately compact $C_0$-semigroup iff 
$\sigma(A)$ is a countably infinite set of eigenvalues with finite geometric multiplicities,
which has no finite limit points and is such that, for its arbitrary arrangement $\left\{ \lambda_n\right\}_{n=1}^\infty$,
\[
\Rep\lambda_n\to -\infty,\ n\to \infty.
\]
\end{thm}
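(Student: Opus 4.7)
The plan is to combine the \textit{Characterization of Compactness for Scalar Type Spectral Operators} (Lemma \ref{CSTSO}) with the weak spectral mapping theorem $\sigma(T(t))=\overline{e^{t\sigma(A)}}$ available for scalar type spectral $C_0$-semigroups (\cite[Theorem $4.1$]{Markin2021(2)}) and with the already-established \textit{Characterization of the Generation of Immediately Norm-Continuous Scalar Type Spectral $C_0$-Semigroups} (Theorem \ref{CGINCS}).

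For the \textit{``if''} part, the condition $\Rep\lambda_n\to-\infty$ bounds $\sup_{\lambda\in\sigma(A)}\Rep\lambda$ from above, so the \textit{Generation Theorem} (Theorem \ref{GT}) produces the semigroup $T(t)=e^{tA}$. For each fixed $t>0$ and each $M\in\N$ the set $\delta_M:=\left\{\lambda\in\sigma(A)\,\middle|\,\Rep\lambda\ge -M\right\}$ is \textit{finite}, so
\[
T_M(t):=\sum_{\lambda\in\delta_M}e^{t\lambda}E_A(\{\lambda\})
\]
is a finite-rank operator (each eigenprojection $E_A(\{\lambda\})$ being finite-rank by hypothesis), while the estimate \eqref{boundedop} applied to the complementary spectral integral yields $\|T(t)-T_M(t)\|\le 4M'e^{-tM}$, where $M'\ge 1$ is from \eqref{bounded}. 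Letting $M\to\infty$ shows that $T(t)$ is a uniform limit of finite-rank operators, and therefore compact.

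For the \textit{``only if''} part, assume $\{T(t)\}_{t\ge 0}$ is immediately compact. Since compactness entails norm-continuity (Remarks \ref{remscs}), the Corollary following Theorem \ref{CGINCS} gives boundedness of $\left\{\lambda\in\sigma(A)\,\middle|\,\Rep\lambda\ge b\right\}$ for every $b\in\R$. Fix $\lambda_0\in\sigma(A)$ and any $t>0$; by the weak spectral mapping theorem, $e^{t\lambda_0}\in\sigma(T(t))\setminus\{0\}$ and, by Lemma \ref{CSTSO}, this is an isolated eigenvalue of $T(t)$. I then infer that $\lambda_0$ is isolated in $\sigma(A)$: any sequence $\sigma(A)\ni\lambda_n\to\lambda_0$ forces $e^{t\lambda_n}\to e^{t\lambda_0}$, and isolation upstairs forces $\lambda_n-\lambda_0\in(2\pi i/t)\Z$ for large $n$, which together with the convergence gives $\lambda_n=\lambda_0$ eventually. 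Isolation of $\lambda_0$ in $\sigma(A)$ implies $E_A(\{\lambda_0\})\ne 0$, so $\lambda_0$ is an eigenvalue; its geometric multiplicity is finite because $T(t)$ acts on the invariant subspace $E_A(\{\lambda_0\})X$ as the nonzero scalar $e^{t\lambda_0}$ times the identity, and a compact operator can be a nonzero multiple of the identity only on a finite-dimensional space. Discreteness combined with boundedness of $\left\{\lambda\in\sigma(A)\,\middle|\,\Rep\lambda\ge -M\right\}$ makes each such set \textit{finite}, which yields both countability of $\sigma(A)$ and $\Rep\lambda_n\to-\infty$ for any arrangement. Finally, $\sigma(A)$ cannot be finite, for then $A$ would be bounded (Preliminaries), $\{T(t)\}_{t\ge 0}$ would be uniformly continuous, $T(t)\to I$ in operator norm as $t\to 0+$, and $I$ would be compact, contradicting the infinite-dimensionality of $X$ (cf.\ Remarks \ref{remscs}).

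The delicate step is the transfer of isolation from $\sigma(T(t))$ down to $\sigma(A)$ across the non-injective exponential; it is settled by the $(2\pi i/t)\Z$-periodicity observation above, after which the remaining structural features of $\sigma(A)$ follow routinely from Lemma \ref{CSTSO} and the spectral-bound constraint inherited from Theorem \ref{CGINCS}.
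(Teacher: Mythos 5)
Your proof is correct, but it takes a genuinely different route from the paper's. The paper channels both directions through Pazy's criterion \cite[Theorem $3.3$]{Pazy1968} (immediate compactness $\Leftrightarrow$ immediate norm continuity plus compactness of the resolvent): for the \textit{``only if''} part it gets discreteness of $\sigma(A)=\sigma_p(A)$ from the compact resolvent via \cite[Lemma 1]{Schwartz1954} and \cite[Corollary V.$1.15$]{Engel-Nagel2006}, the decay $\Rep\lambda_n\to-\infty$ from the point-spectrum mapping theorem $\sigma_p(T(1))=e^{\sigma_p(A)}$ together with Lemma \ref{CSTSO}, and finite multiplicities from the eigenvalue correspondence \eqref{rps1}--\eqref{rps2} for $R(\mu,A)$; for the \textit{``if''} part it verifies compactness of $R(\lambda,A)$ by the resolvent spectral mapping theorem and Lemma \ref{CSTSO}, adds immediate norm continuity from Theorem \ref{CGINCS}, and again invokes Pazy. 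You instead work directly with the semigroup operators: your \textit{``if''} part is a self-contained finite-rank approximation $\sum_{\lambda\in\delta_M}e^{t\lambda}E_A(\{\lambda\})$ with the error controlled by \eqref{boundedop}, needing neither Pazy's theorem, nor Theorem \ref{CGINCS}, nor the resolvent; your \textit{``only if''} part extracts discreteness of $\sigma(A)$ from Lemma \ref{CSTSO} applied to the compact operator $T(t)$ via the weak spectral mapping theorem and the $(2\pi i/t)\Z$-periodicity isolation-transfer argument, obtains $\sigma(A)=\sigma_p(A)$ from the support of $E_A(\cdot)$, and finite multiplicities from compactness of $T(t)$ restricted to the closed invariant subspace $E_A(\{\lambda_0\})X$ --- essentially the same devices the paper itself deploys later in the proof of Corollary \ref{EIC}. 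What each buys: the paper's argument is shorter and yields compactness of the resolvent as a byproduct, while yours is more elementary and spectral-measure-theoretic. Two small points you use tacitly: the identification of the geometric multiplicity of $\lambda_0$ with $\dim E_A(\{\lambda_0\})X$ (i.e., $\ker(A-\lambda_0 I)=E_A(\{\lambda_0\})X$ for scalar type spectral operators), which is legitimate and is exactly the identification the paper makes inside the proof of Lemma \ref{CSTSO}; and the fact that $e^{t\lambda_n}\in\sigma(T(t))$, which is the inclusion $e^{t\sigma(A)}\subseteq\sigma(T(t))$ contained in \cite[Theorem $4.1$]{Markin2021(2)}. Neither is a gap, but both deserve an explicit word.
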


\begin{proof}\

\textit{``Only if''} part. Suppose that, a scalar type spectral operator $A\neq 0$ in a complex Banach space $(X,\|\cdot\|)$ with spectral measure $E_A(\cdot)$ generates 
an \textit{immediately compact} $C_0$-semigroup $\left\{T(t)\right\}_{t\ge 0}$. Then $A$ is necessarily \textit{unbounded} (see Remarks \ref{remscs}), and hence, its spectrum 
$\sigma(A)$ is an unbounded set (see Preliminaries), which, in particular, is \textit{infinite}.

Then, by \cite[Theorem $3.3$]{Pazy1968}, the semigroup is \textit{immediately norm-continuous} and the resolvent
\[
R(\lambda,A):=(A-\lambda I)^{-1}=\int\limits_{\sigma(A)}\dfrac{1}{\mu-\lambda}\,dE_A(\mu)
\]
is \textit{compact} for all $\lambda \in \rho(A):=\C\setminus \sigma(A)$, i.e., the operator $A$ is \textit{regular} in the sense of \cite[Definition 1]{Schwartz1954}.

By \cite[Lemma 1]{Schwartz1954} and \cite[Corollary V.$1.15$]{Engel-Nagel2006}, 
\[
\sigma(A)=\sigma_p(A)
\]
is a countable set with no finite limit points. This, in view of the fact that the spectrum $\sigma(A)$ is an \textit{infinite} set, implies that, for its arbitrary arrangement $\left\{ \lambda_n\right\}_{n=1}^\infty$,
\[
\lambda_n\to \infty,\ n\to \infty.
\]

Since $T(t)=e^{tA}$, $t>0$, (see Preliminaries), in particular $T(1)=e^A$, is a \textit{compact} operator and by the \textit{Point Spectrum Spectral Mapping Theorem} (\cite[Theorem $4.2$]{Markin2021(2)})
\begin{equation*}
\sigma_p(T(1)) =e^{\sigma_p(A)},\  t\ge 0,
\end{equation*}
(cf. \cite[Theorem V.$2.6$]{Engel-Nagel2006}), by the \textit{Characterization of Compactness for Scalar Type Spectral Operators} (Lemma \ref{CSTSO}), we infer that
\[
e^{\Rep\lambda_n}=\left|e^{\lambda_n}\right|\to 0,\ n\to\infty,
\]
and hence,
\[
\Rep\lambda_n\to -\infty,\ n\to \infty.
\]

Furthermore, for an arbitrary $\mu \in \rho(A)$,
\begin{equation}\label{rps1}
\lambda\in \sigma_p(A) \iff (\lambda-\mu)^{-1}\in \sigma_p\left(R(\mu,A)\right),
\end{equation}
in which case
\begin{equation}\label{rps2}
\ker(A-\lambda I)=\ker\left(R(\mu,A)-(\lambda-\mu)^{-1}I\right)
\end{equation}
(see, e.g., \cite{Markin2020EOT}).

This, by the \textit{Characterization of Compactness for Scalar Type Spectral Operators} (Lemma \ref{CSTSO}), implies that each eigenvalue of $A$ has a finite geometric multiplicity.

\textit{``If''} part. Suppose that, a scalar type spectral operator $A\neq 0$ in a complex Banach space $(X,\|\cdot\|)$ with spectral measure $E_A(\cdot)$ is subject to the conditions of the \textit{``if''} part. 

Since the spectrum $\sigma(A)$ is unbounded, the operator $A$ is \textit{unbounded} 
(see Preliminaries).

By the \textit{Characterization of the Generation of Immediately Norm-Continuous Scalar Type Spectral $C_0$-Semigroups} (Theorem \ref{CGINCS}), we infer that $A$ generates an \textit{immediately norm-continuous} $C_0$-semigroup. 

Furthermore, by the \textit{Spectral Mapping Theorem for the Resolvent} (\cite[Theorem V.$1.13$]{Engel-Nagel2006}), for an arbitrary $\lambda\in \rho(A)$,
\[
\sigma\left(R(\lambda,A)\right)\setminus \left\{ 0\right\}=\left(\sigma(A)-\lambda\right)^{-1},
\]
and hence, the spectrum $\sigma\left(R(\lambda,A)\right)$ of $R(\lambda,A)$ consists of $0$ (because $A$ is unbounded) and a nonempty countably infinite set $\sigma\left(R(\lambda,A)\right)\setminus \left\{0\right\}$ such that, for its arbitrary arrangement $\left\{ \lambda_n\right\}_{n=1}^\infty$,
\begin{equation*}
\lambda_n\to 0,\ n\to \infty.
\end{equation*}

By \eqref{rps1} and \eqref{rps2}, we also infer that the set set $\sigma\left(R(\lambda,A)\right)\setminus \left\{0\right\}$ consists of the eigenvalues of
$R(\lambda,A)$ with finite geometric multiplicities.

By the the \textit{Characterization of Compactness for Scalar Type Spectral Operators} (Lemma \ref{CSTSO}), we conclude that the resolvent $R(\lambda,A)$ is \textit{compact} for all $\lambda\in \rho(A)$, which along with the \textit{immediate norm-continuity} of the $C_0$-semigroup generated by $A$, by \cite[Theorem $3.3$]{Pazy1968}, implies that the latter is \textit{immediately compact}.
\end{proof}

Instantly, we obtain

\begin{cor}[Characterization of Immediately Compact Scalar Type Spectral $C_0$-Semigroups]
A $C_0$-semigroup (of scalar type spectral operators) on complex infinite-dimensional Banach space generated by a scalar type spectral operator $A$ is immediately compact iff $\sigma(A)$ is a countably infinite set of eigenvalues with finite geometric multiplicities, which has no finite limit points and is such that, for its arbitrary arrangement $\left\{ \lambda_n\right\}_{n=1}^\infty$,
\[
\Rep\lambda_n\to -\infty,\ n\to \infty.
\]
\end{cor}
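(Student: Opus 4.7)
The plan is to derive this corollary directly from Theorem \ref{CIC}, which characterizes the \emph{generation} of immediately compact scalar type spectral $C_0$-semigroups. Since every scalar type spectral operator generates at most one $C_0$-semigroup (the semigroup of exponentials furnished by the Generation Theorem, Theorem \ref{GT}), the condition that the semigroup $\{T(t)\}_{t\ge 0}$ generated by $A$ be immediately compact is equivalent to the condition that $A$ generate \emph{some} immediately compact $C_0$-semigroup. Hence Theorem \ref{CIC} applies in both directions of the desired equivalence.

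The one technical wrinkle is that Theorem \ref{CIC} is stated under the standing hypothesis $A\neq 0$, whereas the corollary does not explicitly exclude the case $A=0$. I would first dispose of this case separately: if $A=0$ on the infinite-dimensional space $X$, then $\{T(t)\}_{t\ge 0}$ is the trivial uniformly continuous semigroup $T(t)=I$, which is not compact (Remarks \ref{remscs}), while at the same time $\sigma(A)=\{0\}$ fails to be a countably infinite set of eigenvalues tending in real part to $-\infty$. Thus both sides of the equivalence are false in that case, and the equivalence holds vacuously.

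With $A\neq 0$ in force, the \emph{``only if''} direction is immediate from the \emph{``only if''} direction of Theorem \ref{CIC}. For the \emph{``if''} direction, the stated spectral conditions imply that $\sigma(A)$ is unbounded, hence $A$ is unbounded (see Preliminaries) and in particular nonzero; Theorem \ref{CIC} then yields that $A$ generates an immediately compact $C_0$-semigroup, which by uniqueness of the generated semigroup must coincide with $\{T(t)\}_{t\ge 0}$. There is no real obstacle here: the corollary is merely a rephrasing of the generation characterization as a property of the semigroup itself, the only point requiring any attention being the degenerate case $A=0$.
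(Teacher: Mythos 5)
Your proposal is correct and follows essentially the same route as the paper, which obtains this corollary directly from Theorem \ref{CIC} (the paper simply states ``Instantly, we obtain''), using the fact that the semigroup generated by $A$ is uniquely the exponential semigroup. Your separate treatment of the degenerate case $A=0$, where both sides of the equivalence fail on an infinite-dimensional space, is a harmless and sensible extra precaution rather than a departure from the paper's argument.
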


We also have the following analogue of the  \textit{Equivalence of Eventual and Immediate Norm-Continuity} (Corollary \ref{EEINC}).

\begin{cor}[Equivalence of Eventual and Immediate Compactness]\label{EIC}\ \\
If a scalar type spectral $C_0$-semigroup in a complex infinite-dimensional Banach space is eventually compact, its is immediately compact.
\end{cor}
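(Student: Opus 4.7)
The plan is to verify the hypotheses of the \emph{Characterization of the Generation of Immediately Compact Scalar Type Spectral $C_0$-Semigroups} (Theorem \ref{CIC}) and then invoke it. Suppose $\left\{T(t)\right\}_{t\ge 0}$ is an eventually compact scalar type spectral $C_0$-semigroup with generator $A$ on an infinite-dimensional complex Banach space, and let $t_0>0$ be such that $T(t_0)$ is compact.

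First I would observe that eventual compactness implies eventual norm-continuity by Remarks \ref{remscs}, hence by Corollary \ref{EEINC} the semigroup is \emph{immediately norm-continuous}. Thus, by Theorem \ref{CGINCS}, every half-plane slice $\left\{\lambda\in\sigma(A)\,\middle|\,\Rep\lambda\ge b\right\}$ is bounded. Moreover, by the second item of Remarks \ref{remscs}, $A$ must be unbounded (otherwise $T(t)\to I$ uniformly as $t\to 0+$, forcing $I$ to be compact on an infinite-dimensional space), so $\sigma(A)$ is unbounded, and in particular infinite. Immediate norm-continuity also delivers the strong spectral mapping theorem $\sigma(T(t_0))\setminus\{0\}=e^{t_0\sigma(A)}$ mentioned in the introduction, while compactness of $T(t_0)$ combined with Lemma \ref{CSTSO} (or the classical Riesz–Schauder theorem) tells us that $\sigma(T(t_0))\setminus\{0\}$ is at most countable, with $0$ as its only possible accumulation point, and each of its elements is an eigenvalue of $T(t_0)$ of finite geometric multiplicity.

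The main step — and the expected main obstacle — is to show that $\Rep\lambda_n\to-\infty$ for any arrangement of $\sigma(A)$. Suppose, for contradiction, that some sequence of pairwise distinct $\lambda_n\in\sigma(A)$ satisfies $\Rep\lambda_n\ge b$ for some $b\in\R$; by the bounded-slice property, $\{\lambda_n\}$ is bounded and thus admits a convergent subsequence $\lambda_{n_k}\to\mu$ with $\Rep\mu\ge b$. Then $e^{t_0\lambda_{n_k}}\to e^{t_0\mu}\neq 0$. Either infinitely many of the $e^{t_0\lambda_{n_k}}$ are pairwise distinct — contradicting the fact that $e^{t_0\mu}\neq 0$ cannot be a limit point of $\sigma(T(t_0))$ — or they are eventually equal to $e^{t_0\mu}$, in which case $\lambda_{n_k}-\mu\in(2\pi i/t_0)\Z$ eventually, forcing $\lambda_{n_k}=\mu$ eventually, contradicting distinctness.

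Having established that $\Rep\lambda_n\to -\infty$, I would deduce that $\sigma(A)=\bigcup_{n\in\N}\left\{\lambda\in\sigma(A)\,\middle|\,\Rep\lambda\ge -n\right\}$ is a countable union of \emph{finite} sets (each slice being bounded by immediate norm-continuity and having its real parts bounded below, so only finitely many of its elements can occur in $\sigma(A)$), whence $\sigma(A)$ is countably infinite and has no finite limit point. The finite geometric multiplicity of each $\lambda\in\sigma(A)$ follows because $Af=\lambda f$ implies $T(t_0)f=e^{t_0\lambda}f$, so $\ker(A-\lambda I)\subseteq \ker(T(t_0)-e^{t_0\lambda}I)$, and the latter is finite-dimensional by Riesz–Schauder since $e^{t_0\lambda}\neq 0$. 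An application of Theorem \ref{CIC} then completes the proof.
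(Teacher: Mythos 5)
Your overall strategy---verify the hypotheses of Theorem \ref{CIC} after establishing the spectral picture via immediate norm-continuity (Corollary \ref{EEINC}, Theorem \ref{CGINCS}) and the spectral mapping theorem for eventually norm-continuous semigroups---is viable and differs somewhat from the paper's route, which works directly with the weak spectral mapping theorem $\sigma(T(t))=\overline{e^{t\sigma(A)}}$ and with compactness of $T(t_1)$ for a suitably chosen $t_1\ge t_0$; your convergent-subsequence/periodicity dichotomy handles the same collapse-under-$e^{t_0\cdot}$ subtlety that the paper handles by passing to $t_1$. However, there is a genuine gap in the final application of Theorem \ref{CIC}: its hypothesis requires $\sigma(A)$ to be a countably infinite set of \emph{eigenvalues} with finite geometric multiplicities, and you never show that every point of $\sigma(A)$ is an eigenvalue. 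Your multiplicity argument (via $\ker(A-\lambda I)\subseteq \ker\left(T(t_0)-e^{t_0\lambda}I\right)$) only bounds the kernel when $\lambda$ already is an eigenvalue; it produces no eigenvectors. This step is exactly where the scalar type spectral structure must be used once more: having shown that $\sigma(A)$ has no finite limit points, each $\lambda\in\sigma(A)$ is an isolated point of the support of the spectral measure $E_A(\cdot)$, so $E_A(\{\lambda\})\neq 0$, and every nonzero $f\in E_A(\{\lambda\})X$ satisfies $Af=\lambda f$; hence $\sigma(A)=\sigma_p(A)$, which is how the paper closes this point.

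A smaller defect: your justification that $A$ is unbounded misapplies Remarks \ref{remscs}. Under \emph{eventual} compactness you do not know that $T(t)$ is compact for small $t>0$, so the uniform convergence $T(t)\to I$ as $t\to 0+$ does not by itself force $I$ to be compact. The conclusion is still easily available: if $A$ were bounded, then $e^{-t_0A}$ would be a bounded inverse of the compact operator $T(t_0)=e^{t_0A}$, so $I=T(t_0)e^{-t_0A}$ would be compact (equivalently, compactness would propagate to all $t\ge 0$ since $T(t)=e^{(t-t_0)A}T(t_0)$), contradicting infinite dimensionality---this is in essence the paper's argument via the necessary unboundedness of $T^{-1}(t_0)$. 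With these two repairs your proof goes through.
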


\begin{proof}
Suppose that a $C_0$-semigroup $\left\{T(t)\right\}_{t\ge 0}$ on a complex infinite-dimensional Banach space generated by a scalar type spectral operator $A\neq 0$ with spectral measure $E_A(\cdot)$ is \textit{compact} for $t\ge t_0$ with some $t_0>0$.

Since $T(t)=e^{tA}$, $t\ge t_0$,  (see Preliminaries) is a \textit{compact operator}, in view of the \textit{infinite dimensionality} of the underlying space, its inverse
\[
T^{-1}(t)=e^{-tA}:=\int\limits_{\sigma(A)} e^{-t\lambda}\,dE_A(\lambda)
\]
is necessarily \textit{unbounded} (see, e.g., \cite{Markin2020EOT}), which implies that
the scalar type spectral operator $A$ has an \textit{unbounded}, in particular \textit{infinite}, spectrum $\sigma(A)$, and hence, is \textit{unbounded} 
(see Preliminaries).

Since, by the \textit{Weak Spectral Mapping Theorem} (\cite[Theorem $4.1$]{Markin2021(2)}),
\begin{equation}\label{wsmt1}
\sigma(T(t))=\overline{e^{t\sigma(A)}},\ t\ge 0,
\end{equation}
we infer that $\sigma(A)$ has no finite limit points. Otherwise, by the \textit{continuity} of the exponential function $e^{t_0\cdot}$, the spectrum $\sigma(T(t_0))$ of the \textit{compact operator} $T(t_0)$ would have a \textit{nonzero finite limit point}, which is impossible by the \textit{Characterization of Compactness for Scalar Type Spectral Operators} (Lemma \ref{CSTSO}).

Being an infinite set in $\C$ with no finite limit points, the spectrum $\sigma(A)$ is \textit{countably infinite}.

Further, since $\sigma(A)$ is the \textit{support} for the spectral measure $E_A(\cdot)$ (see Preliminaries) and consist of \textit{isolated points},
\[
\forall\, \lambda\in \sigma(A):\ E_A(\lambda)\neq 0,
\]
and hence,
\[
\sigma(A)=\sigma_p(A),
\]
\cite{Dun-SchIII}.

Since the operator $T(t)=e^{tA}$ is \textit{compact} for each $t\ge t_0$, considering
the $2\pi i$-\textit{periodicity} ($i$ is the \textit{imaginary unit}) of the exponential function $e^{\cdot}$, we infer that there exists a $t_1\ge t_0$ such that the spectrum $\sigma(T(t_1))$ is an \textit{infinite} set.
Indeed, assuming that the set 
\[
\sigma(T(t_0))=\overline{e^{t_0\sigma(A)}}
\]
(see \eqref{wsmt1}) is \textit{finite}, we conclude that the \textit{infinite set} $\sigma(A)$ contains an infinitely subset consisting of the points of the form 
\[
\lambda_0+it_0^{-1}2\pi n,\ n\in \Z,
\]
with some $\lambda_0\in \C$, mapped under $e^{t_0\cdot}$ into the same point of 
$\sigma(T(t_0))$. Then, for any $t>t_0$, these points are mapped under $e^{t\cdot}$
into infinitely many pints of
\[
\sigma(T(t))=\overline{e^{t\sigma(A)}}
\]
(see \eqref{wsmt1}).

Let $\left\{ \lambda_n\right\}_{n=1}^\infty$ be an arbitrary arrangement of $\sigma(A)$. Then, by the \textit{Characterization of Compactness for Scalar Type Spectral Operators} (Lemma \ref{CSTSO}), 
\[
e^{t_1\Rep\lambda_n}=\left|e^{t_1\lambda_n}\right|\to 0,\ n\to\infty.
\]

Therefore,
\[
\Rep\lambda_n\to -\infty,\ n\to \infty.
\]

Furthermore, since, for arbitrary $\lambda\in \sigma_p(A)$ and $f\in \ker(A-\lambda I)$,
\[
y(t):=e^{t\lambda}f,\ t\ge 0,
\]
is the \textit{unique} eigenvalue solution of the Cauchy problem
\begin{equation*}
\begin{cases}
y'(t)=Ay(t),\ t\ge 0,\\
y(0)=f,
\end{cases}
\end{equation*}
(see Preliminaries), we infer that
\[
T(t)f=e^{t\lambda}f,\ t\ge 0,
\]
and hence,
\[
f\in \ker\left(T(t)-e^{t\lambda} I\right),\ t\ge 0,
\]
in particular, $f\in \ker\left(T(t_0)-e^{ t_0\lambda} I\right)$, which implies that
\[
\ker(A-\lambda I)\subseteq \ker\left(T(t_0)-e^{t_0\lambda} I\right).
\]

Whence, since, by the \textit{Characterization of Compactness for Scalar Type Spectral Operators} (Lemma \ref{CSTSO}), $\ker\left(T(t_0)-e^{t_0\lambda} I\right)$ is \textit{finite-dimensional}, we conclude that each eigenvalue of $A$ has a \textit{finite geometric multiplicity}.

Thus, by the \textit{Characterization of the Generation of Immediately Compact Scalar Type Spectral $C_0$-Semigroups} (Theorem \ref{CIC}), the $C_0$-semigroup generated by $A$ is \textit{immediately compact}. 
\end{proof}

\section{The Case of Normal $C_0$-Semigroups}

For the important particular case of the $C_0$-semigroups of \textit{normal operators}, we instantly obtain the following counterparts of the corresponding characterizations
established for $C_0$-semigroups of \textit{scalar type spectral operators}.

\begin{cor}[Characterization of the Generation of Immediately Norm-Continuous Normal $C_0$-Semigroups]
A normal operator $A$ in a complex Hilbert space generates an immediately norm continuous $C_0$-semigroup (of normal operators) iff, for any $b\in \R$, the set
\begin{equation*}
\left\{\lambda\in\sigma(A) \,\middle|\, \Rep\lambda\ge b\right\}
\end{equation*}
is bounded.
\end{cor}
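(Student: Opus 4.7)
The plan is to recognize this corollary as an immediate specialization of Theorem~\ref{CGINCS} to normal operators, for which the underlying Banach space is a complex Hilbert space. The essential observation, recorded in the Preliminaries, is that every normal operator $A$ in a complex Hilbert space $H$ is a scalar type spectral operator whose resolution of the identity is the standard orthogonal projection-valued spectral measure $E_A(\cdot)$ supplied by the spectral theorem for normal operators, and whose Borel operational calculus coincides with the classical Borel functional calculus for normal operators (see, e.g., \cite{Dun-SchIII}).

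With this identification, Theorem~\ref{CGINCS} directly yields that the boundedness of the set $\{\lambda\in\sigma(A)\mid \Rep\lambda\ge b\}$ for every $b\in\R$ is equivalent to $A$ generating an immediately norm-continuous $C_0$-semigroup $\{T(t)\}_{t\ge 0}$ of scalar type spectral operators on $H$, with
\[
T(t)=\int\limits_{\sigma(A)} e^{t\lambda}\,dE_A(\lambda),\ t\ge 0.
\]
By the \textit{Generation Theorem} (Theorem~\ref{GT}), the set $\sigma(A)$ is contained in a left half-plane $\{\lambda\in\C\mid \Rep\lambda\le\omega\}$ for some $\omega\in\R$, so $\lambda\mapsto e^{t\lambda}$ is a bounded Borel function on $\sigma(A)$ for each $t\ge 0$, rendering each $T(t)$ bounded.

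The only remaining point is to verify that each $T(t)$ is a \textit{normal} operator, not merely a scalar type spectral one. This is routine: applying the classical Borel functional calculus of a normal operator to a bounded Borel function $F$ yields a normal operator $F(A)$, since its adjoint is $\bar{F}(A)$ and the two commute as they arise from the same orthogonal projection-valued measure, giving $F(A)F(A)^*=|F|^2(A)=F(A)^*F(A)$. I do not anticipate any genuine obstacle in this proof: the whole argument amounts to invoking the (classical) equivalence between normal operators on a Hilbert space and scalar type spectral operators with orthogonal projection-valued spectral measure, and then appealing to Theorem~\ref{CGINCS}.
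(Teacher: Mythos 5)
Your proposal is correct and matches the paper's approach: the paper obtains this corollary instantly by specializing Theorem~\ref{CGINCS}, using the standard fact that a normal operator in a complex Hilbert space is a scalar type spectral operator whose resolution of the identity is its orthogonal spectral measure (so that the exponentials $T(t)=e^{tA}$ produced by the Borel operational calculus are themselves normal). Your extra verification that each $T(t)$ is normal is a harmless elaboration of what the paper leaves implicit.
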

 
\begin{rem}
In view of the fact that, for a normal operator $A$ in a complex Hilbert space,
\begin{equation*}
\|R(\lambda,A)\|=\dfrac{1}{\dist(\lambda,\sigma(A))},\ \lambda\in \rho(A),
\end{equation*}
where
\[
\dist(\lambda,\sigma(A)):=\inf_{\mu\in \sigma(A)}|\mu-\lambda|>0,
\]
(see, e.g., \cite{Dun-SchII,Plesner}),
the prior corollary is consistent with the known characterizations of immediate and eventual norm continuity of $C_0$-semigroups in complex Hilbert spaces (\cite[Theorem 1]{You1992} and \cite[Theorem 4]{El-Mennaoui-Engel1994}, respectively).
\end{rem} 

\begin{cor}[Characterization of Immediately Norm-Continuous Normal $C_0$-Se\-migroups]
A $C_0$-semigroup (of normal operators) on a complex Hilbert space generated by a normal operator $A$ is immediately norm-continuous iff, for any $b\in \R$, the set
\begin{equation*}
\left\{\lambda\in\sigma(A) \,\middle|\, \Rep\lambda\ge b\right\}
\end{equation*}
is bounded.
\end{cor}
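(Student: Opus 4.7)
The plan is to derive this corollary essentially for free from the already-established characterization for scalar type spectral $C_0$-semigroups (Theorem \ref{CGINCS} and its immediate corollary), by invoking the classical fact that every normal operator on a complex Hilbert space is a scalar type spectral operator (this is Wermer's theorem, cited earlier in the Preliminaries via \cite{Wermer1954}, together with the Hilbert space spectral theorem, under which the resolution of the identity of a normal operator is a genuine projection-valued measure).

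First I would note that if $A$ is a normal operator generating a $C_0$-semigroup $\{T(t)\}_{t\ge 0}$ on a complex Hilbert space $H$, then $A$ is in particular a scalar type spectral operator in $H$, with spectral measure $E_A(\cdot)$ coinciding with its projection-valued spectral measure from the spectral theorem. By the \textit{Generation Theorem} (Theorem \ref{GT}), the semigroup is precisely the semigroup of exponentials $T(t)=e^{tA}=\int_{\sigma(A)}e^{t\lambda}\,dE_A(\lambda)$, and since each exponential $e^{t\lambda}$ is a bounded Borel function on $\sigma(A)$ whose complex conjugate commutes with itself under the Borel operational calculus, each $T(t)$ is itself normal. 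Thus the semigroup is legitimately a semigroup of normal operators.

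Next, since $A$ is a scalar type spectral operator, I would simply apply the corollary following Theorem \ref{CGINCS} (\textit{Characterization of Immediately Norm-Continuous Scalar Type Spectral $C_0$-Semigroups}): the semigroup $\{T(t)\}_{t\ge 0}$ is immediately norm-continuous if and only if, for every $b\in\R$, the set $\{\lambda\in\sigma(A)\mid\Rep\lambda\ge b\}$ is bounded. This yields both implications of the corollary at once.

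I do not anticipate any real obstacle here; the entire content of the argument is the reduction of the normal case to the scalar type spectral case, which is standard. The only minor point worth explicitly mentioning is that the semigroup generated by $A$ consists of normal operators, so that the parenthetical qualification ``(of normal operators)'' in the statement is justified, but this is immediate from the Borel operational calculus applied to the spectral measure of $A$.
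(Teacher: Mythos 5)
Your proposal is correct and matches the paper's approach: the paper states this corollary as an instant consequence of the scalar type spectral case (Theorem \ref{CGINCS} and its corollary), relying precisely on the fact that a normal operator in a complex Hilbert space is a scalar type spectral operator whose exponentials are normal. Your additional remark justifying the qualification ``(of normal operators)'' via the Borel operational calculus is consistent with the paper's implicit reasoning and introduces nothing new.
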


\begin{cor}[Equivalence of Eventual and Immediate Norm-Continuity]\ \\
If a normal $C_0$-semigroup in a complex Hilbert space is eventually norm-continuous, it is immediately norm-continuous.
\end{cor}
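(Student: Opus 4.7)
The plan is to reduce this statement to the already established \emph{Equivalence of Eventual and Immediate Norm-Continuity} for scalar type spectral $C_0$-semigroups (Corollary \ref{EEINC}). The bridge is the basic structural fact, cited in the Preliminaries via \cite{Wermer1954}, that in a complex Hilbert space every normal operator is a scalar type spectral operator (its spectral measure being the usual projection-valued measure furnished by the spectral theorem). Consequently, a $C_0$-semigroup of normal operators is in particular a $C_0$-semigroup of scalar type spectral operators, and its generator is normal, hence scalar type spectral.

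Concretely, I would start by assuming that a $C_0$-semigroup $\{T(t)\}_{t\ge 0}$ of normal operators on a complex Hilbert space $H$ is eventually norm-continuous, and let $A$ be its generator. By Wermer's theorem recalled in the Preliminaries, $A$ is scalar type spectral and the semigroup itself consists of scalar type spectral operators. Now Corollary \ref{EEINC} applies verbatim: an eventually norm-continuous scalar type spectral $C_0$-semigroup is immediately norm-continuous. This yields immediate norm-continuity of $\{T(t)\}_{t\ge 0}$.

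There is essentially no obstacle beyond observing that the hypotheses of Corollary \ref{EEINC} are satisfied; the proof is a one-line invocation. For presentation, I would phrase it as: \emph{The statement is the specialization of Corollary \textup{\ref{EEINC}} to $C_0$-semigroups of normal operators on a complex Hilbert space, which are a particular case of $C_0$-semigroups of scalar type spectral operators by Wermer's theorem \textup{\cite{Wermer1954}}.} No additional estimates, spectral conditions, or constructions are required.
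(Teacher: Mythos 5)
Your proposal is correct and matches the paper's intent exactly: the paper offers no separate proof, stating that these normal-operator corollaries are ``instantly'' obtained by specializing the scalar type spectral results, which is precisely your one-line invocation of Corollary \ref{EEINC}. The only minor quibble is attribution: the fact that a normal operator in a complex Hilbert space is scalar type spectral follows directly from the spectral theorem (Wermer's theorem \cite{Wermer1954} is the finer statement that the scalar type spectral operators are exactly those \emph{similar} to normal ones), and note that you do not even need the generator to be normal, since Corollary \ref{EEINC} is stated for the semigroup itself.
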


\begin{cor}[Characterization of the Generation of Roumieu-type Gevrey Ultradifferentiable Normal $C_0$-Semigroups]
Let $1\le \beta <\infty$. A normal operator $A$ in a complex Hilbert space generates a $\beta$th-order Roumieu-type Gevrey ultradifferentiable on $(0,\infty)$ $C_0$-semigroup (of normal operators) iff there exist $b>0$ and $a\in \R$ such that
\begin{equation*}
\sigma(A)\subseteq \left\{\lambda\in \C \,\middle|\,\Rep\lambda\le a-b|\Imp\lambda|^{1/\beta}\right\}.
\end{equation*}
\end{cor}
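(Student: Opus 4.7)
The plan is to deduce this corollary as an immediate specialization of the \textit{Characterization of the Generation of Roumieu-type Gevrey Ultradifferentiable Scalar Type Spectral $C_0$-Semigroups} stated just before it. The key observation, already recorded in the Preliminaries (citing \cite{Wermer1954}), is that in a complex Hilbert space the normal operators are precisely the self-adjoint instances of the scalar type spectral operators, so every normal operator $A$ on a Hilbert space is a scalar type spectral operator whose spectral measure coincides with its resolution of the identity.

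First, I would note that the \textit{``if''} direction is essentially formal: assuming the spectral inclusion
\begin{equation*}
\sigma(A)\subseteq \left\{\lambda\in \C \,\middle|\,\Rep\lambda\le a-b|\Imp\lambda|^{1/\beta}\right\},
\end{equation*}
the preceding scalar-type characterization gives that $A$ generates a $\beta$th-order Roumieu-type Gevrey ultradifferentiable $C_0$-semigroup $\{T(t)\}_{t\ge 0}$ of scalar type spectral operators, with
\[
T(t)=\int\limits_{\sigma(A)} e^{t\lambda}\,dE_A(\lambda),\ t\ge 0,
\]
via the Borel operational calculus from the \textit{Generation Theorem}. For a normal $A$, the functional calculus integrands $e^{t\lambda}$ are continuous and bounded on $\sigma(A)\cap\{\Rep\lambda\le \omega\}$, and the resulting operators are normal (the calculus sends complex conjugation to adjoints), so $\{T(t)\}_{t\ge 0}$ is automatically a semigroup of normal operators.

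For the \textit{``only if''} direction, I would simply invoke the scalar-type characterization in reverse: a normal $A$ generating a Gevrey ultradifferentiable $C_0$-semigroup is, in particular, a scalar type spectral operator generating a Gevrey ultradifferentiable scalar type spectral $C_0$-semigroup (the generated semigroup, being unique and coinciding with the exponentials $e^{tA}$ supplied by the Borel calculus, is automatically scalar type spectral), and hence the required spectral inclusion holds.

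No step here is a genuine obstacle; the only nontrivial sanity check is that the semigroup produced by the scalar-type theorem is indeed the Hilbert-space normal $C_0$-semigroup in question, which follows from the uniqueness of the $C_0$-semigroup generated by $A$ together with the fact that the normal functional calculus and the Borel operational calculus of \cite{Dunford1954,Survey58,Dun-SchIII} agree on bounded Borel functions for normal operators. Consequently, the corollary is a one-line transfer from the scalar type spectral case, and I would present the proof as such.
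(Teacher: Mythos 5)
Your proposal is correct and follows exactly the paper's route: the corollary is obtained there with no separate argument, as an immediate specialization of the scalar type spectral characterization, using that every normal operator in a complex Hilbert space is a scalar type spectral operator whose spectral measure is its resolution of the identity and that the semigroup it generates consists of the exponentials $e^{tA}$ given by the Borel operational calculus, which are normal. One small wording correction: normal operators are not the ``self-adjoint instances'' of scalar type spectral operators (the fact cited from \cite{Wermer1954} is that scalar type spectral operators on a Hilbert space are those similar to normal operators); what your argument actually needs, and uses, is simply that a normal operator is scalar type spectral with orthogonal-projection-valued spectral measure.
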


\begin{cor}[Characterization of the Generation of Beurling-type Gevrey Ultradifferentiable Normal $C_0$-Semigroups]
Let $1<\beta<\infty$. A normal operator $A$ in a complex Hilbert space generates a $\beta$th-order Beurling-type Gevrey ultradifferentiable on $(0,\infty)$
$C_0$-semigroup (of normal operators) iff, for any $b>0$, there exists an $a\in \R$ such that
\begin{equation*}
\sigma(A)\subseteq \left\{\lambda\in \C \,\middle|\,\Rep\lambda\le a-b|\Imp\lambda|^{1/\beta}\right\}.
\end{equation*}
\end{cor}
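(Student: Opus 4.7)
The plan is to deduce this corollary directly from its analogue for scalar type spectral $C_0$-semigroups established in \cite{Markin2016}, exploiting the fact that every normal operator on a complex Hilbert space is, by the spectral theorem, a scalar type spectral operator whose resolution of the identity is its usual projection-valued measure. This is the same transfer template employed for the other normal-operator corollaries in this section.

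For the ``if'' direction, I would start from the assumed spectral inclusion. Since $A$ is normal, it is in particular scalar type spectral, so the Beurling-type Gevrey characterization for scalar type spectral $C_0$-semigroups from \cite{Markin2016} applies and produces a $\beta$th-order Beurling-type Gevrey ultradifferentiable on $(0,\infty)$ $C_0$-semigroup $\{T(t)\}_{t\ge 0}$ generated by $A$, with each $T(t)$ a scalar type spectral operator. To upgrade ``scalar type spectral'' to ``normal'' I would invoke the Borel operational calculus: each $T(t)=e^{tA}$ is $F(A)$ for the bounded Borel measurable function $F(\lambda)=e^{t\lambda}$, and a bounded Borel measurable function of a normal operator is itself normal. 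For the ``only if'' direction, the hypothesis furnishes a $C_0$-semigroup of normal operators generated by $A$; since normal operators are scalar type spectral, the ``only if'' half of the scalar-type-spectral characterization in \cite{Markin2016} immediately yields the required spectral inclusion.

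Because the entire argument is a transfer principle rather than a new analytic argument, there is no genuine obstacle to overcome. The only point deserving a moment's attention is the identification of the spectral measure of $A$ viewed as a normal operator with its resolution of the identity viewed as a scalar type spectral operator, which is immediate from the uniqueness part of the spectral theorem and ensures that the spectrum $\sigma(A)$ appearing in both characterizations refers to the same set.
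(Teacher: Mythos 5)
Your proposal is correct and coincides with the paper's (essentially tacit) argument: the corollary is obtained by specializing the Beurling-type Gevrey characterization for scalar type spectral $C_0$-semigroups of \cite[Corollary $4.1$]{Markin2016} to a normal operator $A$, which is scalar type spectral with its canonical projection-valued spectral measure, the operators $T(t)=e^{tA}$ being normal by the Borel operational calculus. No further comment is needed.
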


Cf. \cite[Corollary $4.1$]{Markin2016}.

\begin{cor}[Characterization of the Generation of Eventually Differentiable Normal $C_0$-Semigroups]
A normal operator $A$ in a complex Hilbert space generates an eventually differentiable $C_0$-semigroup $\left\{T(t)\right\}_{t\ge 0}$ (of normal operators) iff there exist $\omega\in \R$, $a>0$, and $b>0$ such that  
\begin{equation*}
\sigma(A)\subseteq \left\{\lambda\in \C \,\middle|\,\Rep\lambda\le \Rep\lambda\le \min\left(\omega,a-b\ln|\Imp\lambda|\right) \right\},
\end{equation*}
in which case the orbits
\begin{equation*}
[0,\infty)\ni t\mapsto T(t)f\in X
\end{equation*} 
are strongly differentiable on $[b^{-1},\infty)$ for all $f\in X$.
\end{cor}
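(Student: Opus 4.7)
The plan is to obtain this corollary as a direct specialization of the \textit{Characterization of the Generation of Eventual Differentiable Scalar Type Spectral $C_0$-Semigroups} (Theorem \ref{CED}) to the Hilbert space setting, using the well-known fact that every normal operator in a complex Hilbert space is a scalar type spectral operator (its resolution of the identity is the projection-valued spectral measure furnished by the spectral theorem for normal operators; see, e.g., \cite{Dun-SchII,Plesner}).

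First, I would observe that if $A$ is normal in a complex Hilbert space $H$, then $A$ is in particular a scalar type spectral operator, so Theorem \ref{CED} applies verbatim and yields the equivalence with the spectral inclusion and the asserted differentiability of the orbit maps on $[b^{-1},\infty)$. The only additional point needing verification is that the $C_0$-semigroup generated, a priori only known to consist of scalar type spectral operators, actually consists of \textit{normal} operators. This is immediate from the integral representation
\[
T(t)=e^{tA}=\int\limits_{\sigma(A)}e^{t\lambda}\,dE_A(\lambda),\ t\ge 0,
\]
supplied by the \textit{Generation Theorem} (Theorem \ref{GT}): since the spectral measure $E_A(\cdot)$ of a normal operator is projection-valued with \emph{orthogonal} projections, the Borel functional calculus produces normal operators for every bounded Borel function, and $e^{t\cdot}$ is bounded on $\sigma(A)$ once $\sigma(A)\subseteq\{\Rep\lambda\le\omega\}$.

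For the converse direction, suppose $A$ is normal and generates an eventually differentiable $C_0$-semigroup. Again viewing $A$ as scalar type spectral, the ``only if'' half of Theorem \ref{CED} gives exactly the required inclusion of $\sigma(A)$.

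No step is really an obstacle here; the main thing to be careful about is simply to invoke the correct earlier results cleanly and to note the normality of $T(t)$ via the functional calculus rather than merely its scalar type spectrality. The statement, and the explicit interval $[b^{-1},\infty)$ of strong differentiability, then transfers verbatim from Theorem \ref{CED}.
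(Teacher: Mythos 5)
Your proposal is correct and follows essentially the same route as the paper, which obtains this corollary (like all the others in that section) by instantly specializing Theorem \ref{CED} to normal operators viewed as scalar type spectral operators in a Hilbert space. Your extra observation that $T(t)=e^{tA}$ is itself normal, via the Borel functional calculus applied to the orthogonal projection-valued spectral measure, is exactly the (implicit) justification for the parenthetical ``of normal operators'' in the statement, so nothing further is needed.
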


\begin{cor}[Characterization of Eventually Differentiable Normal $C_0$-Semigroups]
A $C_0$-semigroup $\left\{T(t)\right\}_{t\ge 0}$ (of normal operators) on a complex Hilbert space generated by a normal operator $A$ is eventually differentiable 
iff there exist $a>0$ and $b>0$ such that  
\begin{equation*}
\sigma(A)\subseteq \left\{\lambda\in \C \,\middle|\,\Rep\lambda\le a-b\ln|\Imp\lambda| \right\},
\end{equation*}
in which case the orbits
\begin{equation*}
[0,\infty)\ni t\mapsto T(t)f\in X
\end{equation*} 
are strongly differentiable on $[b^{-1},\infty)$ for all $f\in X$.
\end{cor}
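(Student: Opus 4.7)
The proof is obtained by a direct transfer of the previously established Characterization of Eventually Differentiable Scalar Type Spectral $C_0$-Semigroups to the normal setting. The plan is first to invoke the fact, recorded in the Preliminaries via Wermer's theorem, that every normal operator in a complex Hilbert space is a scalar type spectral operator: the projection-valued spectral measure provided by the classical spectral theorem serves as the resolution of the identity and furnishes the Borel operational calculus. Hence a $C_0$-semigroup $\left\{T(t)\right\}_{t\ge 0}$ of normal operators generated by a normal operator $A$ is, in particular, a scalar type spectral $C_0$-semigroup.

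Second, I would apply the Characterization of Eventually Differentiable Scalar Type Spectral $C_0$-Semigroups of the previous section to $A$. This yields the equivalence between eventual differentiability of $\left\{T(t)\right\}_{t\ge 0}$ and the existence of $b>0$ and $a\in\R$ such that
\[
\sigma(A)\subseteq \left\{\lambda\in\C \,\middle|\, \Rep\lambda\le a-b\ln|\Imp\lambda|\right\},
\]
together with strong differentiability of the orbit maps $[0,\infty)\ni t\mapsto T(t)f\in X$ on $[b^{-1},\infty)$ for each $f\in X$.

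The only cosmetic point to reconcile is the sign requirement $a>0$ stated here against the $a\in\R$ allowed in the underlying Banach-space corollary. This is inconsequential, since enlarging $a$ can only weaken the spectral inclusion: if the inclusion holds for some $a\in\R$, then it holds with $a$ replaced by any $a'\ge a$, and in particular by some $a'>0$. Conversely, the condition with $a>0$ is clearly a special case of the condition with $a\in\R$, so the two forms are equivalent.

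I do not anticipate any substantive obstacle. The argument is a routine specialization of a Banach-space result to the Hilbert-space setting; the only conceptual ingredient is the inclusion of normal operators in the class of scalar type spectral operators, which is standard and already invoked elsewhere in the paper.
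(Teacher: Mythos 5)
Your proposal is correct and matches the paper's route exactly: the paper obtains this corollary as an instant specialization of the scalar type spectral characterization, using that a normal operator in a complex Hilbert space is a scalar type spectral operator (its spectral measure serving as the resolution of the identity). Your reconciliation of $a>0$ versus $a\in\R$ by enlarging $a$ (with $b$, and hence the interval $[b^{-1},\infty)$, unchanged) is the right observation and settles the only discrepancy.
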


\begin{cor}[Characterization of Compactness for Normal Operators]\ \\
A normal operator $A\neq 0$ in a complex infinite-dimensional Hilbert space is compact iff the spectrum $\sigma(A)$ of $A$ consists of $0$ and a nonempty countable set of nonzero eigenvalues with finite geometric multiplicities.

If the set $\sigma(A)\setminus \left\{0\right\}$ is countably infinite, for its arbitrary arrangement $\left\{ \lambda_n\right\}_{n=1}^\infty$ (i.e., $\N\ni n\mapsto \lambda_n\in \sigma(A)\setminus \left\{0\right\}$ is a bijection),
\begin{equation*}
\lambda_n\to 0,\ n\to \infty.
\end{equation*}
\end{cor}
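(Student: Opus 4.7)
The plan is to observe that this corollary is a direct specialization of the \textit{Characterization of Compactness for Scalar Type Spectral Operators} (Lemma \ref{CSTSO}) to the Hilbert space setting, with essentially no additional work required. The key bridging fact, already recorded in the Preliminaries, is that in a complex Hilbert space every normal operator is a scalar type spectral operator: the classical spectral theorem for (possibly unbounded) normal operators provides the resolution of the identity $E_A(\cdot)$ supported on $\sigma(A)$ and the associated Borel operational calculus, so a normal operator is trivially similar to itself as a scalar type spectral operator. Therefore the hypotheses of Lemma \ref{CSTSO} are met.

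First, I would explicitly invoke the spectral theorem to certify that $A\neq 0$ normal on a complex infinite-dimensional Hilbert space qualifies as a scalar type spectral operator $A\neq 0$ on a complex infinite-dimensional Banach space. Second, I would apply Lemma \ref{CSTSO} to obtain the stated equivalence together with the convergence $\lambda_n\to 0$ for any arrangement of $\sigma(A)\setminus\{0\}$ in the countably infinite case. No new analytical estimates are needed, and no spectral structure results beyond those already established for scalar type spectral operators need to be re-proved.

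There is essentially no obstacle to overcome: the work was done once and for all in Lemma \ref{CSTSO}, and the present corollary is one of several immediate transfers promised in the introduction (``\emph{All the obtained results are immediately transferred to the $C_0$-semigroups of normal operators}''). The entire proof should therefore consist of a single sentence citing Lemma \ref{CSTSO} together with the observation that normal operators on a complex Hilbert space are scalar type spectral operators.
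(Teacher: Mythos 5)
Your proposal is correct and matches the paper's intent exactly: the corollary is stated without proof as an immediate specialization of Lemma \ref{CSTSO}, justified by the fact (noted in the Preliminaries via the spectral theorem) that a normal operator in a complex Hilbert space is a scalar type spectral operator, with its resolution of the identity furnished by the classical spectral theorem. No further argument is needed.
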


\begin{cor}[Characterization of the Generation of Immediately Compact Normal $C_0$-Semigroups]
A normal operator $A\neq 0$ in a complex infinite-dimensional Hilbert space generates an immediately compact $C_0$-semigroup (of normal operators) iff 
$\sigma(A)$ is a countably infinite set of eigenvalues with finite geometric multiplicities,
which has no finite limit points and is such that, for its arbitrary arrangement $\left\{ \lambda_n\right\}_{n=1}^\infty$,
\[
\Rep\lambda_n\to -\infty,\ n\to \infty.
\]
\end{cor}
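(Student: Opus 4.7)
The plan is to obtain this corollary as a direct specialization of the \textit{Characterization of the Generation of Immediately Compact Scalar Type Spectral $C_0$-Semigroups} (Theorem \ref{CIC}) to the Hilbert-space normal setting, exploiting the fact that every normal operator in a complex Hilbert space is already a scalar type spectral operator in the sense of the Preliminaries.

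First I would invoke the classical spectral theorem for normal operators (see, e.g., \cite{Dun-SchII,Plesner}) to equip a normal operator $A\neq 0$ on a complex infinite-dimensional Hilbert space with its strongly $\sigma$-additive projection-valued spectral measure $E_A(\cdot)$, supported on $\sigma(A)$, and yielding the integral representation $A=\int_{\sigma(A)}\lambda\,dE_A(\lambda)$ together with the associated Borel operational calculus. This realizes $A$ as a scalar type spectral operator (with the additional feature that each $E_A(\delta)$ is an orthogonal projection), so that Theorem \ref{CIC} applies verbatim, with the same spectrum $\sigma(A)$ and identical eigenvalue/geometric-multiplicity data.

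Second I would verify that the generated semigroup $T(t)=e^{tA}=\int_{\sigma(A)} e^{t\lambda}\,dE_A(\lambda)$, produced by the \textit{Generation Theorem} (Theorem \ref{GT}) in the ``if'' direction and supplied as the intrinsic semigroup of scalar type spectral operators in both directions of Theorem \ref{CIC}, actually consists of \emph{normal} operators. This is immediate from the Borel operational calculus: since $e^{t\cdot}$ is a bounded Borel function on $\sigma(A)$ (the spectrum lying in a left half-plane by Theorem \ref{GT}), and the functional calculus is a $*$-homomorphism on bounded Borel functions of a normal operator, each $T(t)$ is normal. In the converse direction, a normal $C_0$-semigroup is a fortiori a scalar type spectral $C_0$-semigroup, so the spectral condition follows directly from Theorem \ref{CIC}.

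With these two observations in place, the corollary reduces to a verbatim restatement of Theorem \ref{CIC}, since the characterizing condition is phrased purely in terms of the generator's spectrum and transfers without modification. I do not foresee any genuine obstacle; the only point requiring attention is the confirmation that the semigroup obtained remains within the class of normal operators, which is settled by the $*$-homomorphism property of the Borel functional calculus.
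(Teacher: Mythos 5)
Your proposal is correct and follows exactly the route the paper intends: the corollary is stated as an instant specialization of Theorem \ref{CIC}, since a normal operator in a complex Hilbert space is a scalar type spectral operator (with orthogonal spectral measure), and the generated semigroup $e^{tA}$ consists of normal operators by the Borel functional calculus. Your extra verification of the normality of the semigroup operators is a sensible elaboration of the same argument, not a deviation from it.
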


\begin{cor}[Characterization of Immediately Compact Normal $C_0$-Semigroups]
A $C_0$-semigroup (of normal operators) on a complex infinite-dimensional Hilbert space generated by a normal operator $A$ is immediately compact iff $\sigma(A)$ is a countably infinite set of eigenvalues with finite geometric multiplicities, which has no finite limit points and is such that, for its arbitrary arrangement $\left\{ \lambda_n\right\}_{n=1}^\infty$,
\[
\Rep\lambda_n\to -\infty,\ n\to \infty.
\]
\end{cor}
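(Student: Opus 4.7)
The plan is to derive this corollary directly from the \textit{Characterization of Immediately Compact Scalar Type Spectral $C_0$-Semigroups} stated just above it, using the well-known fact that every normal operator in a complex Hilbert space is a scalar type spectral operator (its resolution of the identity is the classical spectral measure of the normal operator, as recorded in the Preliminaries and in \cite{Wermer1954,Dun-SchIII}). Thus a normal $C_0$-semigroup is in particular a scalar type spectral $C_0$-semigroup, and the generation theorem (Theorem \ref{GT}) applies.

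For the \textit{``only if''} direction, if the normal $C_0$-semigroup $\{T(t)\}_{t\ge 0}$ generated by $A$ is immediately compact, then as a scalar type spectral $C_0$-semigroup it is immediately compact, so by the preceding corollary, $\sigma(A)$ is a countably infinite set of eigenvalues with finite geometric multiplicities, has no finite limit points, and satisfies $\Rep\lambda_n\to-\infty$ for any arrangement $\{\lambda_n\}_{n=1}^\infty$. For the \textit{``if''} direction, assuming the spectral conditions on the normal operator $A$, the preceding corollary yields that $A$ generates an immediately compact $C_0$-semigroup, which coincides with the semigroup of exponentials given by the Borel functional calculus; since the functional calculus of a normal operator via bounded Borel functions produces normal operators, this semigroup is in fact a normal $C_0$-semigroup.

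I do not anticipate any real obstacle: the work is entirely done by the \textit{Characterization of Immediately Compact Scalar Type Spectral $C_0$-Semigroups} (itself a consequence of Theorem \ref{CIC}) together with the observation that the class of normal operators in Hilbert space is a subclass of the scalar type spectral operators and is preserved by the bounded Borel functional calculus. This matches the authorial pattern of the whole final section, in which every corollary is declared to follow \emph{instantly} from its scalar type spectral counterpart. Accordingly, the proof proposal is essentially a one-line invocation of the preceding corollary; no estimates, no new computations, no additional machinery are required.
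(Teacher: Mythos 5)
Your proposal is correct and follows exactly the paper's route: the paper proves nothing new in the final section, instead transferring each scalar type spectral result ``instantly'' to normal $C_0$-semigroups via the fact that normal operators in a complex Hilbert space are scalar type spectral operators whose Borel functional calculus yields normal operators. Your one-line invocation of the preceding corollary (ultimately Theorem \ref{CIC}) is precisely the intended argument.
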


\begin{cor}[Equivalence of Eventual and Immediate Compactness]\ \\
If a normal $C_0$-semigroup in a complex infinite-dimensional Hilbert space is eventually compact, its is immediately compact.
\end{cor}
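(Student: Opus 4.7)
The plan is to obtain this corollary as an immediate specialization of the \emph{Equivalence of Eventual and Immediate Compactness} for scalar type spectral $C_0$-semigroups (Corollary \ref{EIC}), since the $C_0$-semigroups of normal operators are a special case of the $C_0$-semigroups of scalar type spectral operators.

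First, I would invoke the structural fact recorded in the Preliminaries that, in a complex Hilbert space, every normal operator is a scalar type spectral operator (indeed, scalar type spectral operators in Hilbert spaces are precisely those similar to the normal ones, by \cite{Wermer1954}). Consequently, a normal $C_0$-semigroup $\{T(t)\}_{t\ge 0}$ on a complex infinite-dimensional Hilbert space $H$ is generated by a normal operator $A$, which is also scalar type spectral, so $\{T(t)\}_{t\ge 0}$ is a scalar type spectral $C_0$-semigroup in the sense of the Generation Theorem (Theorem \ref{GT}).

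Next, given that $\{T(t)\}_{t\ge 0}$ is eventually compact on the complex infinite-dimensional Banach space $H$, Corollary \ref{EIC} applies verbatim and yields that the semigroup is immediately compact. No additional argument is required.

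There is essentially no obstacle here: the entire work has already been carried out at the more general scalar type spectral level, and the present statement is simply the corresponding corollary for the subclass of normal operators. The same reduction is used throughout the preceding corollaries in this final section, so this proof is purely a matter of citing the appropriate general result.
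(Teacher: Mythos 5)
Your proposal is correct and matches the paper's intent exactly: the paper derives all results of the final section, including this corollary, by specializing the scalar type spectral statements (here Corollary \ref{EIC}) to normal operators, which are scalar type spectral operators in a complex Hilbert space. No further comment is needed.
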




\end{document}